\documentclass[12pt,a4paper]{amsart}

\usepackage{amssymb, amstext, amscd, amsmath, color}

\usepackage{url}
\usepackage{tikz}
\usepackage{epstopdf}
\usepackage{amsfonts}
\usepackage{amsthm}
\usepackage{amsfonts}
\usepackage{array}
\usepackage{epsfig}
\usepackage{eucal}
\usepackage{latexsym}
\usepackage{mathrsfs}
\usepackage{textcomp}
\usepackage{verbatim}
\usepackage{setspace}

\newtheorem{thm}{Theorem}[section]

\newtheorem{cor}[thm]{Corollary}

\newtheorem{defn}[thm]{Definition}

\newtheorem{lem}[thm]{Lemma}

\newtheorem{rem}[thm]{Remark}

\numberwithin{equation}{section}

%

\newcommand{\bD}{{\mathbb{D}}}

\newcommand{\bR}{{\mathbb{R}}}

%

  \newcommand{\C}{{\mathcal{C}}}

  \newcommand{\F}{{\mathcal{F}}}

  \newcommand{\M}{{\mathcal{M}}}

\renewcommand{\P}{{\mathcal{P}}}

\renewcommand{\S}{{\mathcal{S}}}
  \newcommand{\T}{{\mathcal{T}}}
  \newcommand{\U}{{\mathcal{U}}}

  \newcommand{\X}{{\mathcal{X}}}
  \newcommand{\Y}{{\mathcal{Y}}}
  
%

%


\newcommand{\rank}{\operatorname{rank}}
\usepackage{blkarray}

\textwidth   16.1cm \textheight  22.3cm \topmargin  -0.4in
\oddsidemargin  -0.05in \evensidemargin  -0.05in

\begin{document}

\title[Double-distance frameworks and mixed sparsity graphs]{Double-distance frameworks and mixed sparsity graphs}
\author[A. Nixon]{A. Nixon}
\address{Dept.\ Math.\ Stats.\\ Lancaster University\\
Lancaster LA1 4YF \\U.K.}
\email{a.nixon@lancaster.ac.uk}
\author[S.C. Power]{S. C. Power}
\address{Dept.\ Math.\ Stats.\\ Lancaster University\\
Lancaster LA1 4YF \\U.K. }
\email{s.power@lancaster.ac.uk}
\thanks{2010 {\it  Mathematics Subject Classification.}
52C25, 05C10, 51E15\\
Key words and phrases: bar-joint framework, infinitesimal rigidity, double-distance, coloured graphs, mixed sparsity\\
This work was supported by the Engineering and Physical Sciences Research Council [grant number\quad  EP/P01108X/1].}

\begin{abstract}
A rigidity theory is developed for frameworks in a metric space with two types of distance constraints. Mixed sparsity graph characterisations are obtained for the infinitesimal and continuous rigidity of completely regular bar-joint frameworks in
a variety of such contexts. The main results are combinatorial characterisations for
(i) frameworks restricted to surfaces with both Euclidean and geodesic distance constraints, (ii) frameworks in the plane with Euclidean and non-Euclidean distance constraints, and  (iii) direction-length frameworks in the non-Euclidean plane.
\end{abstract}

\date{}
\maketitle

\section{Introduction}
A bar-joint framework $(G,p)$ in $\mathbb{R}^d$ is the combination of a finite simple graph $G=(V,E)$ and a map $p:V\rightarrow \mathbb{R}^d$. The framework is rigid if the only edge-length-preserving continuous deformations of the framework arise from isometries of $\mathbb{R}^d$.
While it is typically hard to determine rigidity for a given framework, characterising generic rigidity and infinitesimal rigidity is a well-studied problem, solved in the Euclidean plane by   Pollaczek-Geiringer \cite{pol} and Laman  \cite{laman}.
We consider here combinatorial characterisations of rigidity for bar-joint frameworks for various multi-distance  and multi-constraint settings.
When there are just two types of distance constraint
the underlying structure graph of the framework may be bi-coloured with blue and red edges, representing  
the two types of constraint. In this case rigidity requirements lead naturally to necessary sparsity conditions for  monochrome subgraphs, which we refer to as mixed sparsity conditions.

We obtain combinatorial characterisations of generic rigidity for  double-distance frameworks in various 2-dimensional contexts. 
These include (i) bar-joint frameworks restricted to surfaces with both Euclidean and geodesic distance constraints, (ii) frameworks in the plane with Euclidean and non-Euclidean distance constraints, and (iii) direction-length frameworks in the non-Euclidean plane. 
The proofs  make use of inductive characterisations of bi-coloured structure graphs, of the appropriate mixed sparsity type, together with determinations of minimal
rigidity preservation  for a range of coloured graph Henneberg extension moves. 
These new contexts in geometric rigidity theory and their combinatorial characterisations extend {the analyses in: Nixon, Owen and Power \cite{nix-owe-pow-1, nix-owe-pow-2} of $3$-dimensional frameworks which are vertex-constrained to surfaces; Kitson and Power \cite{kit-pow-1} of $2$-dimensional frameworks with non-Euclidean distances; and Servatius and Whiteley \cite{SW} of $2$-dimensional (Euclidean) direction-length frameworks. }

Frameworks in $\bR^2$ with both distance and direction constraints \cite{SW} fall into the more general category of multiple-constraint frameworks. This is also the case for a range of more elementary frameworks in product type contexts where  the individual constraints depend on independent variables and so, in this sense, are separable.
  It seems to us, moreover, that multiple constraint rigidity theory  is of potential significance in a range of applications. We note, for example, that in three dimensions, the measure of residual dipolar coupling (RDC) between rigid units of a protein may be interpreted as a secondary nonmetric constraint \cite{KSC}. Also, in the area of 3D sensor networks it is natural to consider the augmentation of Euclidean distances by (possibly partially available) data such as altitudes to a reference surface \cite{lib-lav}. An elementary example of this, considered in Section \ref{ss:furthercontexts}, is the
separable double-distance context $(\bR^3, d_{xy}, d_z)$ associated with projected distances in the $xy$-plane and in the $z$-axis.  

 The discussion is organised as follows. In Section \ref{s:doubledistance} we define double-distance contexts and the minimal infinitesimal rigidity of their bar-joint frameworks, and we give in Theorem \ref{t:topcount} the necessary Maxwell counting condition for the edges and vertices of such frameworks. In Section \ref{ss:furthercontexts} we recall the $(2,k)$-sparsity and $(2,k)$-tight conditions for graphs and we give 3 illustrative double-distance contexts and their associated sparsity requirements.  In Sections \ref{s:cylinder},   \ref{s:EuclideanAndNon} and \ref{s:directionlength}, 
we obtain the main results, namely the characterisation of infinitesimal rigidity 
for (completely regular) frameworks on some surfaces (with direct and geodesic distances), for frameworks in the plane (with Euclidean and non-Euclidean distances), and for direction-length frameworks in the non-Euclidean plane.
In each of these sections we provide a recursive construction of the appropriate class of bi-coloured graphs. We then give a geometric analysis that the relevant graph operations preserve minimal rigidity at the level of completely regular frameworks.

In the final section we indicate six additional multi-distance contexts for further analysis and which generally require a deeper combinatorial and geometric analysis. This includes Euclidean distance frameworks which are augmented with constraints associated with projections and reflections. 

\section{Double-distance bar-joint frameworks}\label{s:doubledistance}

A  \emph{separation constraint}, or \emph{separation distance},  associated with a set $X$ of points $x,y,\dots $, is a continuous non-negative  function $d:X \times X \to \bR_+$ which is symmetric, so that $d(x,y)=d(y,x)$, for all $x,y$. 
In particular we do not require the triangle inequality to hold and $d(x,x)$ may be nonzero. {If $d_1, d_2$ are separation distances for $X$ then 
$(X, d_1, d_2)$ is said to be a \emph{double-constraint context} or \emph{double-distance context.}

\subsection{Continuous rigidity} Bar-joint frameworks $(G,p)$ and their continuous flexes  may be defined for any metric space $(X,d)$.  In this case $G$ is a simple graph and $p:V\to X$ is a placement map, or realisation in $X$. It is assumed that $p(v)$ and $p(w)$ are distinct if $vw$ is an edge. A continuous flex $p(t)$, for a time parameter $t$ in some interval $[0,a]$, is a pointwise continuous path of placements $p_t:V \to X$, with $p_0=p$, such that for each edge $e=vw$ the map $t \to d(p_t(v),p_t(w))$ is constant.   There are generally two variants of the notion  of a \emph{trivial continuous flex}. 
The (possibly weaker) first form requires that the flex extends to a flex of the complete framework associated with $(G,p)$. 
One could say that the framework is \emph{distance rigid} or \emph{separation rigid} if each continuous flex is trivial in this sense over a small enough time interval.
The other (possibly stronger) form of continuous rigidity requires that any continuous flex over a small enough time interval is given by a continuous \emph{isometric motion} $\tilde{p}_t:X \to X$ of the entire metric space.
We say that the framework is \emph{spatially rigid} in this case, or, simply, \emph{continuously rigid}. 

Bar-joint frameworks restricted to surfaces (see \cite{nix-owe-pow-1,nix-owe-pow-2}) as well as bar-joint frameworks in non-Euclidean spaces (see \cite{kit,kit-pow-1}) give examples of bar-joint frameworks in a metric space setting. 

One can readily extend the definitions of continuous flexes and  continuous rigidity above to \emph{double-distance frameworks} {in a double-distance context $(X, d_1, d_2)$ where $X$ is a topological space with topology determined by $d_1, d_2$.}
Such a framework, $(G,p)$, is associated with  a placement map $p:V\to X$,  and the structure graph $G=(V,E)$ is a 2-(edge)-coloured (or \emph{bi-coloured}) multigraph whose monochrome subgraphs are simple graphs, possibly with loops. We refer to these graphs as  monochrome simple bi-coloured graphs. 

Formally, a \emph{bar} of $(G,p)$ associated with a coloured edge $e\in E$ of $(G,p)$ is a  triple $p(e)=\{p(v), p(w); c\}$, where $e = (vw,c)$ is an edge $vw$ of  $G$ with colour $c= c(e)$. The \emph{length} or \emph{separation} of the bar $p(e)$ is then the nonnegative real number $d(p(e))= d_c(p(v), p(w))$, where $d_c$ is the distance function for the colour $c$. We shall use the colours blue and red, indicated also by $b$ and $r$.

The definition of a continuous flex is as before but with the requirement for a trivial continuous flex being agreement with a continuous motion $\tilde{p}_t$ of $X$ preserving both distance functions. 
As usual a continuous flex may be considered as a continuous path in the topological space $V(G,p)$ of all \emph{configurations} which, by definition, is the set of all solutions $q: V \to X$ to the set of constraint equations
\[
d(q(e)) = d(p(e)),\quad e\in E(G).
\]

While these generalities hold for arbitrary topological spaces we shall be concerned only with Euclidean spaces and associated finite-dimensional manifolds.

\subsection{Infinitesimal flexibility and rigidity}
We now extend the notion of infinitesimal rigidity for bar-joint frameworks. The following abstract class of double-distance contexts covers all of our main examples. 

A double-distance context   $(X, d_1, d_2)$ is \emph{essentially smooth} if
\medskip

(i) $X$ is a topological space with a  dense open subset $X_0$, possibly equal to $X$, which is a compatible smooth manifold ($C_\infty$-manifold),

(ii) the separation distances $d_1, d_2$ are differentiable functions on $X_0 \times X_0$ which determine the topology of  $X_0$.
\medskip

If $X_0=X$ then we say that  $(X, d_1, d_2)$ is a \emph{smooth} double-distance context. This is the case for  the double-distance frameworks on the surfaces considered in Section \ref{s:cylinder}, and for the
mixed-norm frameworks in $\bR^2$ in Section \ref{s:EuclideanAndNon}.

\begin{defn}\label{d:infflex}Let $\X= (X, X_0, d_1, d_2)$ be an essentially smooth double-distance context where $X_0$ is an open subset of the Euclidean space $\bR^n$ and $d_1(x,y)= \|x-y\|_2$.

(i) An \emph{infinitesimal flex} of the bar $\{p_1,p_2;d_i\}$, where $p_1, p_2 \in X_0$, is a \emph{velocity vector} $u = (u_1, u_2)$ in $\bR^2 \oplus \bR^2$ such that
\[
d_i(p_1+tu_1, p_2+tu_2)-d_i(p_1,p_2)) = o(t), \quad \mbox{as } t \to 0.
\]
 
(ii) An  \emph{infinitesimal 
flex} of a double-distance bar-joint framework $(G,p)$ for 
$\X$ with joints in $X_0$ is a velocity vector $u : V \to \bR^{2|V|}$ such that the restriction of
$u$ to each bar of $(G,p)$ is an infinitesimal flex.
\end{defn}

The infinitesimal flex condition may be paraphrased as the statement that  \emph{to first order the velocity vectors do not distort bar separations}. In the case that $d_1(x,y)$ is a Euclidean metric note that a velocity pair is an infinitesimal flex for 
the bar $\{p_1, p_2; d_1\}$ if and only if it is an infinitesimal flex for the length of $\{p_1, p_2; d_1^2\}$. This  is a convenient polynomial separation distance and the equivalence follows from the identity $a^2-b^2=(a-b)(a+b)$.

Let $(G,p)$ be a bar-joint framework for the
double-distance context $(X, X_0, d_1, d_2)$ with edge set $E=E_1\cup E_2$ and placement $p$ with range in $X_0^{|V|}$.
Define the \emph{rigidity map} for the structure graph $G$ to be the function
$f_G: \prod_{v\in V} X_0 \to \prod_{e\in E} \bR$ given by
\[
 f_G((x_v)_{v\in V}) =  \prod_{e \in E} d(p(e)),
\]
where $d(p(e))$ is the separation $d_i(x_v, x_w)$ for the bar $p(e)=\{x_v, x_w; d_i\}$.
 
The following lemma identifies  the space of infinitesimal flexes with the kernel of the derivative of the rigidity map.
The proof is the same as the usual proof for bar-joint frameworks in $\bR^n$.

\begin{lem}\label{l:Df_lemma} Let $(X, X_0, d_1, d_2)$ be an essentially smooth double-distance context where $X_0$ is an open subset of the Euclidean space $\bR^n$ and $d_1(x,y)= \|x-y\|_2$.
Then a vector $u$ is an infinitesimal flex for $(G,p)$ if and only if
the matrix product $((Df_G)(p))u^t$, for the transpose vector $u^t$, is the zero vector in
$\bR^{|E|}$.
\end{lem}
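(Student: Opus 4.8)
The plan is to reduce the stated equivalence to an edge-by-edge computation, using the block structure of the rigidity map. I would first write $f_G = (f_e)_{e \in E}$ as a vector-valued function with one real coordinate for each edge, so that the component indexed by $e = vw$ (carrying the distance $d_i$) is the single-bar separation $f_e(x) = d_i(x_v, x_w)$. The essential feature is that $f_e$ depends only on the two coordinate blocks $x_v$ and $x_w$. Consequently the derivative $Df_G(p)$ is the matrix whose $e$-th row is the gradient $\nabla f_e(p)$, supported on the $v$- and $w$-blocks and zero elsewhere, and the $e$-th entry of the product $(Df_G(p)) u^t$ is exactly $\nabla f_e(p) \cdot u$.

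The key step is to identify this entry with a directional derivative and then match it against Definition \ref{d:infflex}. Since $(X, X_0, d_1, d_2)$ is essentially smooth, each $d_i$ is differentiable on $X_0 \times X_0$, hence so is $f_e$, and the chain rule gives
\[
\nabla f_e(p) \cdot u \;=\; \frac{d}{dt}\Big|_{t=0} f_e(p + tu) \;=\; \frac{d}{dt}\Big|_{t=0} d_i(p_v + tu_v, p_w + tu_w).
\]
Differentiability also yields the first-order expansion
\[
d_i(p_v + tu_v, p_w + tu_w) - d_i(p_v, p_w) \;=\; t\,\big(\nabla f_e(p)\cdot u\big) + o(t) \qquad (t \to 0).
\]
Comparing with Definition \ref{d:infflex}(i), the restriction of $u$ to the bar $e$ is an infinitesimal flex precisely when the left-hand side is $o(t)$, which by the expansion holds if and only if the coefficient $\nabla f_e(p) \cdot u$ — that is, the $e$-th entry of $(Df_G(p))u^t$ — vanishes. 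Ranging over all edges, as in Definition \ref{d:infflex}(ii), $u$ is an infinitesimal flex for $(G,p)$ if and only if $(Df_G(p))u^t = 0$.

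The only point I would pause on is the differentiability of the Euclidean component $d_1(x,y) = \|x-y\|_2$, which fails on the diagonal $x = y$. This causes no difficulty: the standing assumption that $p(v) \neq p(w)$ for each edge keeps every bar off the diagonal, where $d_1$ is smooth; alternatively one may replace $d_1$ by the polynomial separation $d_1^2$, for which the infinitesimal flex conditions coincide (by the remark preceding the lemma) and smoothness is automatic. With this caveat the argument is verbatim the standard one for bar-joint frameworks in $\bR^n$.
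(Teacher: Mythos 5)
Your proof is correct and is exactly the argument the paper has in mind: the paper omits the proof with the remark that it ``is the same as the usual proof for bar-joint frameworks in $\bR^n$,'' and your edge-by-edge first-order expansion, identifying $\nabla f_e(p)\cdot u$ with the $e$-th entry of $(Df_G(p))u^t$, is precisely that standard argument. Your closing caveat about the non-differentiability of $d_1$ on the diagonal (handled by $p(v)\neq p(w)$ for edges, or by passing to $d_1^2$ as in the remark preceding the lemma) is a point the paper glosses over, and it is handled correctly.
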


The lemma shows that we could have defined the real vector space of infinitesimal flexes as the kernel of a rigidity matrix $R(G,p)=(Df_G)(p)$ determined by $d_1$ (or $d_1^2$) and $d_2$. However both viewpoints are useful since the rigidity map may only be determined implicitly as a differentiable function.

Ordering the rows of $Df_G$ so that the rows labelled by edges of the subset $E_1$ for the distance constraint $d_1$ come first,  we have

\[
Df_G(p) =
\begin{blockarray}{cc}
\begin{block}{c[c]}
E_1 & 2R_1(p) \\
E_2 & R_2(p) \\
\end{block}
\end{blockarray}
 \]
where $R_1(p)$ is the usual Euclidean distance rigidity matrix
for the framework $(G_1,p)$, where $G_1=(V, E_1)$,
and where $R_2(p)$ may be considered as the rigidity matrix for the $d_1$-separation distance framework $(G_2,p)$, where $G_2 =(V, E_2)$.

We give an explicit example of the form of a rigidity matrix in Example 2 of Section \ref{ss:furthercontexts}.

We now define the notion of infinitesimal rigidity for a bar-joint framework $(G,p)$ in an essentially smooth double-distance context. 

\begin{defn}\label{d:infinitesimalflex} Let $\X= (X, X_0, d_1, d_2)$ be an essentially smooth double-distance context and let $(G,p)$ be a double-distance bar-joint framework for $\X$ with joints in $X_0$. 

(i) An infinitesimal flex $u$ of  $(G,p)$  is a \emph{rigid motion infinitesimal flex} if
and only if $u$ is the restriction of an infinitesimal flex of the framework $(K(V),p)$ where $K(V)$ is the complete bi-coloured graph on $V$.

(ii) $(G,p)$ is infinitesimally rigid if every infinitesimal flex is a rigid motion infinitesimal flex.
\end{defn}

In analogy with our comments on separation rigidity this definition could be referred to as \emph{separation infinitesimal rigidity}. Here $K(V)$ is the graph with 2 edges between each pair of vertices and 2 loop edges on each vertex, these pairs having distinct colours. The stricter form of rigid motion infinitesimal flex (spatially rigid) would require that  $u$
 is the restriction of an infinitesimal flex of $K(X_0)$.

\begin{defn}\label{d:regular}
Let $\X= (X, X_0, d_1, d_2)$ be an essentially smooth double-distance context. A double-distance framework $(G,p)$ with joints in $X_0$  is \emph{regular} if the rank of $Df_G(p)$ is maximal over all  frameworks $(G,q)$ with joints in $X_0$.
\end{defn}

It follows trivially from the definition of infinitesimal rigidity that the complete graph framework $(K(V),p)$ is infinitesimally rigid. In all our contexts we see that the regular frameworks 
$(K(V),p)$ are also continuously rigid, except for certain small graphs, and so we may make use of the following equivalence. 
We omit the proof which is a standard implicit function argument  \cite{asi-rot}. See also Theorem 3.8 of \cite{nix-owe-pow-1} on the equivalence of continuous rigidity and infinitesimal rigidity for regular frameworks constrained to algebraic surfaces.

\begin{thm}\label{t:rigidityequivalence}
Let  $\X= (X, X_0, d_1, d_2)$ be an essentially smooth double-distance context, with $(X_0, d_1)$  a metric space,  let  $(G,p)$ be a regular framework
with multigraph $G=(V,E)$ and suppose that $(K(V),p)$ is continuously rigid. Then $(G,p)$   is infinitesimally rigid if and only if it is continuously rigid.
\end{thm}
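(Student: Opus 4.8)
The plan is to follow the classical Asimow--Roth argument \cite{asi-rot}, transported to the manifold $X_0$ via the rigidity map $f_G$ and Lemma \ref{l:Df_lemma}. Throughout I would write $M = n|V| = \dim X_0^{|V|}$ and, for a bi-coloured graph $H$ on $V$, let $V(H,p) = f_H^{-1}(f_H(p))$ be the configuration space and $\ker Df_H(p)$ the space of infinitesimal flexes. I also write $T(p)$ for the set of \emph{trivial} configurations, namely the orbit of $p$ under the Lie group $\mathrm{Isom}(X,d_1,d_2)$ of isometric motions of $X$ preserving both separation distances; this orbit is a smooth submanifold of $X_0^{|V|}$ whose tangent space at $p$ is a subspace of the space of rigid-motion infinitesimal flexes. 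Two elementary inclusions hold at the outset: since every edge-constraint of $G$ is also a constraint of $K(V)$, one has $\ker Df_{K(V)}(p) \subseteq \ker Df_G(p)$ and $T(p) \subseteq V(K(V),p) \subseteq V(G,p)$.

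The first substantive step is to convert regularity into manifold structure. Since $(G,p)$ is regular, $\rank Df_G$ is locally constant near $p$, so by the constant-rank theorem $V(G,p)$ is, in a neighbourhood $U$ of $p$, a smooth submanifold of dimension $M - \rank Df_G(p)$ with tangent space at $p$ equal to $\ker Df_G(p)$. I would record the same statement for $K(V)$: in all our contexts $p$ is regular for the complete graph as well, so $V(K(V),p)$ is likewise a smooth manifold near $p$ with tangent space $\ker Df_{K(V)}(p)$. The hypothesis that $(K(V),p)$ is continuously rigid is then used precisely to identify $V(K(V),p) = T(p)$ locally, which yields $\dim T(p) = \dim \ker Df_{K(V)}(p)$ and pins the tangent space of the trivial orbit to the full space of rigid-motion infinitesimal flexes. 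This is the step that bridges the two notions of triviality in Section \ref{s:doubledistance}: it makes ``the flex extends to a flex of the complete framework'' equivalent to ``the flex is an isometric motion of $X$''.

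With these reductions the equivalence becomes a dimension count. If $(G,p)$ is continuously rigid then $V(G,p) = T(p)$ near $p$, so $\dim \ker Df_G(p) = \dim_p V(G,p) = \dim T(p) = \dim \ker Df_{K(V)}(p)$; combined with the inclusion $\ker Df_{K(V)}(p) \subseteq \ker Df_G(p)$ this forces equality of the flex spaces, i.e.\ infinitesimal rigidity. Conversely, if $(G,p)$ is infinitesimally rigid then $\ker Df_G(p) = \ker Df_{K(V)}(p)$, whence $\dim T(p) = \dim \ker Df_{K(V)}(p) = \dim \ker Df_G(p) = \dim_p V(G,p)$. Since $T(p) \subseteq V(G,p)$ are both smooth manifolds of the same dimension near $p$, the submanifold $T(p)$ is open in $V(G,p)$; as $V(G,p)\cap U$ may be taken connected and $T(p)$ is closed in it, we conclude $T(p) = V(G,p)$ near $p$. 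Every continuous flex of $(G,p)$ therefore remains in $T(p)$ and is trivial, giving continuous rigidity.

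I expect the main obstacle to be the first substantive step: justifying rigorously, via the implicit function/constant-rank theorem, that regularity upgrades the pointwise derivative information into a genuine local smooth manifold structure on $V(G,p)$ with tangent space exactly $\ker Df_G(p)$, especially when $X_0$ is only a proper open subset of $\bR^n$ and $d_2$, together with the implicitly-defined rigidity map of Lemma \ref{l:Df_lemma}, is known only as a differentiable function. A secondary technical point, needed for the dimension bookkeeping, is to ensure that $p$ is regular for $K(V)$ and that the tangent space to the trivial orbit $T(p)$ coincides with $\ker Df_{K(V)}(p)$; this is exactly where the continuous rigidity of $(K(V),p)$ is consumed, identifying $V(K(V),p)$ with $T(p)$.
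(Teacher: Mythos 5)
The paper omits the proof of this theorem entirely, describing it only as ``a standard implicit function argument'' in the style of Asimow--Roth \cite{asi-rot} (pointing also to Theorem 3.8 of \cite{nix-owe-pow-1}), and your proposal is precisely that argument: constant-rank manifold structure on the configuration space $V(G,p)$, identification of the trivial configurations through the complete framework $(K(V),p)$, and a dimension count forcing equality of flex spaces and of local configuration spaces. The technical points you flag at the end --- regularity of $p$ for $K(V)$ and the smooth-orbit structure of the trivial motions --- are exactly the details the paper chooses not to spell out and delegates to the cited references, so your write-up matches the paper's intended route.
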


A bar-joint framework $(G,p)$ is \emph{minimally infinitesimally rigid} if it is infinitesimally rigid and $(G-e,p)$ has a nontrivial infinitesimal flex for every edge $e$. 
The following theorem generalises the classical Maxwell counting condition, or ``top count'', which is necessary for minimal infinitesimal rigidity. 

\begin{thm}\label{t:topcount}
Let  $\X= (X, X_0, d_1, d_2)$ be an essentially smooth double-distance context, where $X_0$ is a connected manifold and $(X_0, d_1)$ is  a metric space. Let
$(G,p)$ be a 
minimally infinitesimally rigid framework for $\X$, with joints in $X_0$, and let $G=(V,E)$.
Then 
\[
|E|= \dim(X_0)|V|- \dim(\F((K(V),p)))
\]
where $\F((K(V),p))$ is the infinitesimal flex space for the framework for the complete multigraph $K(V)$.
\end{thm}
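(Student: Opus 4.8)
The plan is to reduce the identity to the rank--nullity theorem applied to the linear map $Df_G(p)$, reading the two sides of the displayed equation as, respectively, a consequence of minimality and a consequence of infinitesimal rigidity. First I would regard $Df_G(p)$ as a linear map from the tangent space of $\prod_{v\in V} X_0$ at $p$, which has dimension $\dim(X_0)\,|V|$, into $\bR^{|E|}$. By Lemma \ref{l:Df_lemma} the infinitesimal flex space $\F((G,p))$ is precisely $\ker Df_G(p)$, so rank--nullity gives
\[
\dim \F((G,p)) = \dim(X_0)\,|V| - \rank Df_G(p).
\]

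Next I would exploit infinitesimal rigidity. Since the edge set of $G$ is contained in that of $K(V)$, the rows of $Df_G(p)$ form a subset of the rows of $Df_{K(V)}(p)$, and hence $\F((K(V),p)) \subseteq \F((G,p))$. By Definition \ref{d:infinitesimalflex}, infinitesimal rigidity of $(G,p)$ asserts that every infinitesimal flex of $(G,p)$ is a rigid motion infinitesimal flex, i.e.\ the restriction of a flex of $K(V)$ on the same vertex set; this is the reverse inclusion. The two flex spaces therefore coincide, so $\dim \F((G,p)) = \dim \F((K(V),p))$, and substituting into the rank--nullity identity yields
\[
\rank Df_G(p) = \dim(X_0)\,|V| - \dim \F((K(V),p)).
\]

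It then remains to show that minimality forces $\rank Df_G(p) = |E|$, equivalently that the rows of $Df_G(p)$ are linearly independent. For any edge $e$, deleting the corresponding row produces $Df_{G-e}(p)$, and since $G-e$ has fewer constraints than $G$ we have $\F((G-e,p)) \supseteq \F((G,p)) = \F((K(V),p))$. Minimality states that $(G-e,p)$ possesses an infinitesimal flex that is not a rigid motion flex, i.e.\ $\F((G-e,p)) \neq \F((K(V),p))$; hence this inclusion is strict and $\rank Df_{G-e}(p) < \rank Df_G(p)$ for every edge $e$. Because deleting a single row lowers the rank exactly when that row is not a linear combination of the remaining rows, a strict drop for every $e$ is precisely the statement that all rows are independent, giving $\rank Df_G(p) = |E|$. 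Equating the two expressions for $\rank Df_G(p)$ completes the argument.

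This is essentially the classical Maxwell count transported to the double-distance setting, so no step is genuinely hard; the point demanding the most care is the translation between the combinatorial notion of minimality and the linear-algebraic independence of the rows. In particular, I expect the main obstacle to be verifying that ``$(G-e,p)$ has a nontrivial flex'' produces a \emph{strict} enlargement of the kernel rather than merely a non-strict one. This is exactly where the precise form of Definition \ref{d:infinitesimalflex}---rigid motion flexes being restrictions of flexes of the complete multigraph $K(V)$ on the same vertex set---is needed, since it guarantees the equality $\F((G,p)) = \F((K(V),p))$ under infinitesimal rigidity and thus pins down the reference flex space against which nontriviality is measured.
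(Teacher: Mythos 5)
Your proposal is correct and follows essentially the same route as the paper's own (much terser) proof: rank--nullity applied to $Df_G(p)$, with infinitesimal rigidity identifying $\ker Df_G(p)$ with $\F((K(V),p))$ and minimality forcing the rows to be linearly independent. Your expansion of the minimality step---strict kernel enlargement under each row deletion implying row independence---is exactly the detail the paper leaves implicit in the phrase ``by the minimality condition the rows are linearly independent.''
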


\begin{proof}
The rigidity matrix $D_{f_G}(p)$ has size $|E| \times n$ where $n=\dim(X_0)|V|$. By Lemma \ref{l:Df_lemma} and Definition \ref{d:infinitesimalflex} the column rank of the rigidity matrix is equal to  $n- \dim(\F((K(V),p)))$. By the minimality condition the rows are linearly independent and so the row rank is $|E|$.
\end{proof}

The theorem expresses a maximal rank property of minimal infinitesimal rigidity. The following definition of \emph{complete regularity} in terms of ranks for subframeworks may be viewed as a natural strong form of geometric genericness. This will be a convenient assumption for the frameworks in construction moves $(G,p)\to (G',p')$ where we wish to show the preservation  of infinitesimal rigidity.

\begin{defn}
Let $(G,p)$ be a framework for a double-distance context and let $K$ be the complete multigraph on the vertex set of $G$. Then $(G,p)$ is completely regular if every subframework $(H, p|H)$ of $(K,p)$ has a rigidity matrix with maximal rank over all framework realisations of $H$.
\end{defn}

If the rigidity map for an essentially smooth double-distance context is a real analytic function  on a connected  dense open set   then it follows from standard arguments that the set of completely regular frameworks for a structure graph $G$ is a dense open set amongst the set of all frameworks for $G$. 

\subsection{Examples of contexts and sparsity conditions} \label{ss:furthercontexts}
For $k\in \{1,2,3\}$, a multigraph $G=(V,E)$ is \emph{$(2,k)$-sparse} if $|E'|\leq 2|V'|-k$ for all subgraphs $(V',E')$ with $|E'|>0$.
Moreover $G$ is \emph{$(2,k)$-tight} if $|E|=2|V|-k$ and $G$ is $(2,k)$-sparse. 

Note that: (i) a $(2,3)$-sparse multigraph in fact has no parallel edges or loops, that is, it is a simple graph. (ii) a $(2,2)$-sparse graph is loopless and may have parallel edges of multiplicity $2$ only. (iii) a $(2,1)$-sparse multigraph may have loops, but no multiple loops at a single vertex, and may have parallel edges of multiplicity $2$ or $3$ only.

For bi-coloured multigraphs arising from minimally rigid bar-joint frameworks in double-distance contexts with $\dim X_0 =2$ we shall see that there are further conditions on monochrome subgraphs.

We now give some illustrative examples of double-distance contexts and multi-constraint contexts.
\medskip

1. It should be evident that the discussion above extends in a routine way to multi-constraint contexts and their bar-joint frameworks, and that  the rigidity matrix then takes the form of a column of rigidity matrices $R_1, \dots , R_n$ associated with separation distances $d_1, \dots , d_n$ respectively. Define a \emph{separable} multi-distance context to be a multi-distance context in which the functions $d_i$ have independent variables. 

Consider in particular  the  multi-seminorm context $(\bR^n,d_1^{r_1},d_2^{r_2}, \dots, d_k^{r_k})$, associated with the decomposition $\bR^n= \bR^{r_1} \times
\dots \times \bR^{r_k}$ for a partition of $\{1, \dots ,n\}$ into sets
$S_1, \dots , S_k$, where
$d_i^{r_i}(x,y)$ depends only on the variables for the set $S_i$.
Then the following elementary theorem follows from the block diagonality of the rigidity matrix.

\begin{thm}
A completely regular separable multi-seminorm framework in $\bR^n$  is minimally infinitesimally rigid if and only if each 
{ maximal } monochrome subframework is minimally infinitesimally rigid in $(\bR^{r_i}, d_i^{r_i})$.
\end{thm}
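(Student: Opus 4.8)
The plan is to exploit the block-diagonal structure of the rigidity matrix that is forced by separability. In the separable multi-seminorm context the variables partition as $\bR^n = \bR^{r_1}\times\dots\times\bR^{r_k}$, and each distance $d_i^{r_i}$ depends only on the coordinates indexed by $S_i$. Consequently, in the derivative $Df_G(p)$, a row coming from an edge of colour $i$ has nonzero entries only in the columns corresponding to the $S_i$-coordinates of its two endpoints. After simultaneously permuting the rows so that all colour-$i$ edges are grouped together, and permuting the columns so that all $S_i$-coordinates of all vertices are grouped together, the rigidity matrix becomes block diagonal: the $i$-th diagonal block is exactly $R_i(p)$, the rigidity matrix of the monochrome subframework $(G_i, p_i)$ in $(\bR^{r_i}, d_i^{r_i})$, where $p_i$ is the projection of $p$ onto the $S_i$-coordinates. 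So the first step is to make this block decomposition precise.

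From block-diagonality the rank is additive: $\rank Df_G(p) = \sum_{i=1}^k \rank R_i(p_i)$, and likewise the kernel decomposes as a direct sum, so the infinitesimal flex space splits as $\F((G,p)) = \bigoplus_i \F((G_i,p_i))$, with the same decomposition for the complete graph $K(V)$. The second step is to record that, because the variables are independent, complete regularity of $(G,p)$ in the product context is equivalent to regularity of each monochrome piece $(G_i,p_i)$ in $\bR^{r_i}$; the maximal-rank condition over all realisations decouples coordinate block by coordinate block. I would then translate minimal infinitesimal rigidity into a rank/count statement. By Definition~\ref{d:infinitesimalflex}, $(G,p)$ is infinitesimally rigid iff $\F((G,p)) = \F((K(V),p))$, and by the direct-sum decomposition this holds iff $\F((G_i,p_i)) = \F((K_i(V),p_i))$ for every $i$, i.e.\ iff each monochrome subframework is infinitesimally rigid in its own space. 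For minimality I would use that the rows of $Df_G(p)$ are independent iff the rows within each block are independent, which is precisely minimal infinitesimal rigidity of each $(G_i,p_i)$; removing any edge $e$ of colour $i$ creates a flex supported entirely in the $i$-th coordinate block.

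The remaining subtlety, and the one place to be careful, is the word \emph{maximal} in the statement: the theorem refers to maximal monochrome subframeworks, not to the subgraphs $G_i = (V, E_i)$ on the full vertex set. The point is that an isolated vertex of colour $i$ (a vertex incident to no edge of colour $i$) contributes a trivial translational flex in the $S_i$-coordinates that is automatically a rigid motion flex and carries no constraint, so it must be excluded to get a clean equivalence; the maximal monochrome subframework in colour $i$ is the framework on the vertices actually used by colour-$i$ edges. I would therefore verify that restricting to the maximal monochrome subframework removes exactly these spurious free coordinate directions, after which the equivalence between infinitesimal rigidity of $(G,p)$ and of each maximal monochrome piece becomes a direct consequence of the direct-sum decomposition. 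This bookkeeping with isolated vertices is the main obstacle; the rest follows mechanically from block-diagonality, Lemma~\ref{l:Df_lemma} and Definition~\ref{d:infinitesimalflex}.
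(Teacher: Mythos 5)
Your core argument --- block-diagonality of $Df_G(p)$ after grouping rows by colour and columns by coordinate block, hence additivity of rank and a direct-sum decomposition of the flex space --- is exactly the paper's (one-line) justification of this theorem, and that part of your write-up is correct. The genuine problem is your treatment of the word \emph{maximal}, which you single out as the main subtlety and then resolve in the wrong direction. You claim that a vertex $v$ incident to no edge of colour $i$ contributes a flex that is ``automatically a rigid motion flex'', so that such vertices should be excluded from the monochrome piece. This is false: the flex in question assigns an arbitrary velocity to $v$ in the $S_i$-coordinates and zero velocity to every other joint, and such a flex is not the restriction of an infinitesimal flex of $(K(V),p)$, since a flex of the complete framework must flex the colour-$i$ bars joining $v$ to the other joints. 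Under your reading the theorem becomes false. Concretely, take $\bR^2=\bR\times\bR$ with $d_i(x,y)=|x_i-y_i|$, $V=\{1,2,3\}$, colour-$1$ edge set $\{12,23\}$ and a single colour-$2$ edge $12$. Your ``maximal'' colour-$2$ piece is the one-edge framework on $\{1,2\}$, which is minimally infinitesimally rigid in $\bR^1$, and the colour-$1$ piece is a spanning tree, also minimally rigid; yet $\dim\F((G,p))=1+2=3$ while $\dim\F((K(V),p))=1+1=2$, so $(G,p)$ is not infinitesimally rigid (consistently, $|E|=3$ while the Maxwell count of Theorem \ref{t:topcount} requires $2\cdot 3-2=4$ edges).

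The correct bookkeeping goes the other way: the maximal monochrome subframework of colour $i$ is taken on the full vertex set, $G_i=(V,E_i)$, i.e.\ ``maximal'' means all edges of colour $i$ together with all vertices, as the paper's corollary makes explicit by demanding \emph{spanning} trees in the $1$-dimensional case. With this reading the equivalence follows cleanly from your own direct-sum decomposition, precisely because an isolated vertex of colour $i$ simultaneously makes $(G_i,p_i)$ infinitesimally flexible and makes $(G,p)$ infinitesimally flexible via the single-vertex flex described above; no exclusion of vertices is needed or permitted.
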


Using the well-known combinatorial characterisations of infinitesimal rigidity in 1- and 2-dimensions \cite{laman} we obtain the following corollary, where each semi-norm $d_i^{r_i}$ corresponds to a Euclidean norm for $1$ or $2$ of the variables. 

\begin{cor}
Let $(\bR^n,d_1^{r_1},d_2^{r_2}, \dots, d_k^{r_k})$ be a separable multi-seminorm context in which $r_i\leq 2$ for $1\leq i \leq k$. Then a generic separable multi-seminorm framework in $\bR^n$  is minimally infinitesimally rigid if and only if each 
$d_i^1$-
{maximal} monochrome structure graph is a spanning tree and each $d_i^2$-monochrome structure graph is $(2,3)$-tight.
\end{cor}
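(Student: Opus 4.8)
The plan is to deduce the corollary directly from the preceding (unlabelled) block-diagonal theorem, combined with the classical rigidity characterisations in dimensions one and two. First I would record that the hypotheses of that theorem are met by a generic framework. Since each squared Euclidean seminorm $d_i^{r_i}$ is a polynomial, the rigidity map is real analytic, and so the remark following the definition of complete regularity guarantees that a generic placement $p$ is completely regular. Moreover, separability is exactly what lets genericity descend: because $d_i^{r_i}$ depends only on the coordinates indexed by $S_i$, the $i$-th maximal monochrome subframework is governed entirely by the projection $p_i$ of $p$ onto the block $\bR^{r_i}$, and algebraic independence of the full coordinate set restricts to algebraic independence of each block. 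Hence each $(G_i,p_i)$ is itself a generic (and completely regular) framework in $(\bR^{r_i}, d_i^{r_i})$, and the theorem reduces minimal infinitesimal rigidity of $(G,p)$ to minimal infinitesimal rigidity of every such $(G_i,p_i)$.

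The remaining task is to translate minimal infinitesimal rigidity of each $(G_i,p_i)$ into the stated combinatorial condition, splitting on $r_i$. For $r_i=1$ the trivial flex space on the line is one-dimensional (translation only, no rotation), and the rigidity matrix is, up to scaling each row by the generic nonzero coordinate difference $x_v-x_w$, the signed incidence matrix of $G_i$, whose rank is $|V|$ minus the number of connected components. Thus $(G_i,p_i)$ is minimally infinitesimally rigid precisely when $G_i$ is connected with $|E_i|=|V|-1$, i.e.\ when the maximal monochrome graph $G_i$ is a spanning tree of $V$. For $r_i=2$ I would invoke the Pollaczek-Geiringer--Laman theorem \cite{pol,laman}: a generic framework in the Euclidean plane is minimally infinitesimally rigid if and only if its graph is $(2,3)$-tight.

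Combining these, and using that the theorem's equivalence is an ``if and only if'' whose condition decouples across colours, I conclude that $(G,p)$ is minimally infinitesimally rigid exactly when each $d_i^1$-maximal monochrome graph is a spanning tree and each $d_i^2$-monochrome graph is $(2,3)$-tight, which is the assertion of the corollary.

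I expect essentially no deep obstacle here; the mathematical content resides in the block-diagonal theorem and in the cited one- and two-dimensional characterisations, all of which may be assumed. The only points demanding care are bookkeeping in nature: verifying that genericity and complete regularity in $\bR^n$ are inherited by each block projection (so that the Laman characterisation, which requires a generic planar configuration, is legitimately applicable), and being careful that ``spanning'' is understood as spanning the full vertex set $V$, so that an isolated vertex in some colour correctly registers as a nontrivial flex. Both are immediate once separability is exploited.
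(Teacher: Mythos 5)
Your proposal is correct and follows essentially the same route as the paper: the paper likewise derives the corollary immediately from the preceding block-diagonal theorem together with the classical one-dimensional (spanning tree) and two-dimensional (Pollaczek-Geiringer--Laman $(2,3)$-tight) characterisations of generic minimal rigidity. Your additional care about genericity and complete regularity descending to each coordinate block via separability is a detail the paper leaves implicit, but it is the same argument.
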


We note two natural specific cases. Firstly consider the multi-seminorm context $(\bR^n,d_1,\break\dots , d_n)$ where  $d_i(x,y)=|x_i-y_i|$ for $x=(x_1,\dots , x_n), y=(y_1,\dots , y_n)$. Here minimal infinitesimal rigidity is characterised by the $n$-coloured multigraph being the union of $n$ monochrome spanning trees of distinct colours. Secondly, consider the  double-distance context
$(\bR^3, d_{xy}, d_z)$ indicated in the introduction. In this case one
can deduce readily that an algebraically generic bar-joint framework $(G,p)$ is minimally infinitesimally rigid if and only if its $xy$-projection is minimally infinitesimally rigid in $\bR^2$ and its $z$-axis projection is minimally infinitesimally rigid in $\bR^1$. 
\medskip

2. Recall that direction-length frameworks in the plane are frameworks with both distance constraints and direction constraints between pairs
 of joints, see Servatius and Whiteley \cite{SW}. Thus they  are associated with realisations of simple bi-coloured multigraphs, that is, ones with no loops and no monochrome parallel edges.
We may also specify direction-length frameworks in an equivalent way as frameworks for the double-distance context $(\bR^2, d_b, d_a)$, where $d_b$ is Euclidean distance and $d_a$ is the angular separation distance given by
\[
d_a(p_1,p_2) = (y_1-y_2)^2/(x_1-x_2)^2.
\]
Here the sparsity type is that of $(2,2)$-tight graphs in which any monochrome subgraph is $(2,3)$-sparse. 
\medskip
 
The rigidity matrix $Df_G(p)$ for  the double-distance framework $(G,p)$  has columns indexed by the coordinates $x_i, y_i$ of the joints $p_i$, for $i=1, \dots , n=|V|)$. Each row is determined by a framework bar $\{p_i,p_j;c\}$, where $c$ is $b$ for a Euclidean bar and $a$ for a direction bar. A row for a Euclidean bar has the form,

\[
\begin{blockarray}{cccccccccc}
\begin{block}{c[ccccccccc]}
2 &...&  x_i-x_j &... & y_i-y_j &... & x_j-x_i &...  & y_j-y_i &... \\
\end{block}
\end{blockarray}
 \]
where the unspecified entries are zero, while the row for a direction bar has the form
\[
\begin{blockarray}{cccccccccc}
\begin{block}{c[ccccccccc]}
\frac{2(y_i-y_j)}{(x_i-x_j)^2} & ...&  -\frac{(y_i-y_j)}{(x_i-x_j)} &... & 1 &... & +\frac{(y_i-y_j)}{(x_i-x_j)} &...  & -1 &... \\
\end{block}
\end{blockarray}.
 \]

\medskip

3.
Consider the torus identification space $\T= [0,1]^2/\sim$ endowed with the flat geodesic distance constraint. By this we mean the minimum of the direct Euclidean distance $d_b(p_1, p_2)$ and the
"re-entrant" distance $d_r(p_1, p_2)$ defined as the minimum of the 2  linear distances associated with the "linear" paths through the horizontal and vertical boundaries of $[0,1]^2$.
 This  leads to the essentially smooth double-distance context $(\T, \T_0, d_b, d_r)$, for direct and re-entrant distances, where $\T_0= (0,1)^2$.
Noting that there is only a $2$-dimensional space of rigid motion infinitesimal flexes, coming from translations, the relevant mixed sparsity condition is $(2,2)$-sparseness with $(2,3)$-sparseness for blue subgraphs.
This setting is closely related to the context of $2$-periodic bar-joint frameworks in the plane. See also Whiteley \cite{Whiunion} and Ross \cite{Ross}. 
\medskip

\section{Double-distance frameworks on surfaces}\label{s:cylinder}

We consider here the unit sphere $\S$ in $\bR^3$ defined by the equation $x^2+y^2+z^2=1$ and the unit cylinder $\Y$ defined by $x^2+y^2=1$.
One may also consider extensions to families of concentric surfaces but we will not do so here.

In \cite{nix-owe-pow-1} it was shown that a framework realised generically on $\M$ with Euclidean distance constraints is minimally rigid if and only if the underlying graph is $(2,k)$-tight, where: $\M=\S$ and $k=3$, or where $\M=\Y$ and $k=2$.
Analogous results were obtained by Whiteley \cite{Whiunion} for the case when only geodesic constraints were considered.
In this section we will extend these results to characterise minimally rigid frameworks on $\M$ when geodesic and Euclidean constraints are present simultaneously. Such frameworks are frameworks $(G,p)$ for a smooth double-distance context $(\M, d_b, d_g)$ in the sense given in Section 2.
The graph edge set is the disjoint union $E_b\cup E_r$ where the blue edges in $E_b$ represent constraints implied by Euclidean distances in $\bR^3$ and the red edges of $E_r$ represent geodesic  distances. Note that in this smooth manifold case a velocity vector 
$u:V \to \bR^2$ for a framework $(G,p)$ is a selection of vectors $u(v)$ in the tangent space of $p(v)$ for each vertex
$v$. 
 
Whiteley \cite{Whicones} showed that, in the context of rigidity, there is an equivalence between geodesic bar constraints and Euclidean bar constraints on the sphere. In our formalism we note that for the sphere the infinitesimal flex space for the Euclidean bar $\{p_1,p_2;d_b\}$ is {identical} to the  infinitesimal flex space for the geodesic bar $\{p_1,p_2;d_g\}$. 
From these observations we obtain the following theorem.

\begin{thm}
Let $G$ be a bi-coloured graph and let $(G,p)$ be a completely regular framework on $\S$ for Euclidean and geodesic distances.
Then the framework $(G,p)$ is minimally rigid on $\S$ if and only if $G$ is $(2,3)$-tight.
\end{thm}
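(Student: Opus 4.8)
The plan is to reduce the bi-coloured problem on $\S$ to the established Euclidean-distance rigidity theory on $\S$, using the flex-space equivalence recorded just before the statement. The single observation that drives everything is that for a pair of joints $p_1,p_2$ on $\S$ the infinitesimal flex space of the Euclidean bar $\{p_1,p_2;d_b\}$ coincides with that of the geodesic bar $\{p_1,p_2;d_g\}$. Each such flex space is the kernel of the corresponding row functional of the rigidity matrix (Lemma \ref{l:Df_lemma}), and for a completely regular framework these rows are nonzero; two nonzero functionals with the same kernel are proportional. Hence, on $\S$, the row of $Df_G(p)$ indexed by a red edge $vw$ is a scalar multiple of the row indexed by a blue edge $vw$, so the colour of an edge is irrelevant to $\rank Df_G(p)$. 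Writing $\bar G$ for the uncoloured simple graph on $V$ obtained by keeping a single edge for each adjacent pair of $G$, I would record that $\rank Df_G(p)$ equals the rank of the ordinary Euclidean-on-$\S$ rigidity matrix of $(\bar G,p)$, and that $(G,p)$ is infinitesimally rigid if and only if $(\bar G,p)$ is.

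For the forward implication, suppose $(G,p)$ is minimally rigid. Proportionality of the rows immediately forbids redundancy: if $G$ contained both a blue and a red edge on some pair $vw$, deleting one would leave the row space, hence the rank, unchanged, so $(G-e,p)$ would still be rigid, contradicting minimality; the same argument excludes loops, whose rows vanish identically. Thus $G$ is simple and $G=\bar G$. Next I would invoke the Maxwell count of Theorem \ref{t:topcount}: on $\S$ the rigid-motion flexes of $(K(V),p)$ are the infinitesimal rotations, and complete regularity forces this flex space to have its minimal dimension, namely $\dim \F((K(V),p))=3$ for $|V|\ge 2$, giving $|E|=2|V|-3$. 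Minimality makes the rows of $Df_G(p)$ linearly independent, so for every subgraph $(V',E')$ the associated rows are independent and, again by complete regularity, $|E'|\le 2|V'|-\dim\F((K(V'),p))=2|V'|-3$. Together these show that $G$ is $(2,3)$-tight.

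For the reverse implication, suppose $G$ is $(2,3)$-tight. By the remark that a $(2,3)$-sparse multigraph is necessarily simple, $G$ has no parallel edges or loops, so $G=\bar G$. Complete regularity guarantees that $Df_G(p)$ attains its maximal rank, which places $(\bar G,p)$ in the regular regime of the Euclidean theory on $\S$. By the surface result of \cite{nix-owe-pow-1}, a regular Euclidean framework on $\S$ is minimally rigid precisely when its graph is $(2,3)$-tight; applying this to $(\bar G,p)$ and transporting back through the rank identity of the first paragraph shows that $(G,p)$ is minimally rigid.

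The geometric substance here is supplied externally: Whiteley's equivalence collapses the two bar types and \cite{nix-owe-pow-1} provides the Euclidean characterisation, so the residual work is bookkeeping. The step I would watch most carefully is aligning the notion of complete regularity for the double-distance context with the genericity hypothesis under which the cited Euclidean-on-$\S$ result is stated, together with confirming that the rows are genuinely \emph{proportional} (not merely that their kernels agree), which is exactly where the nonvanishing of rows guaranteed by complete regularity is used.
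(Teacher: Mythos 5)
Your proposal is correct and follows essentially the same route as the paper: the paper derives this theorem directly from Whiteley's observation that on $\S$ the infinitesimal flex space of a Euclidean bar coincides with that of the geodesic bar, which collapses the bi-coloured problem to the Euclidean one, and then invokes the $(2,3)$-tight characterisation of \cite{nix-owe-pow-1}. The details you supply (proportionality of rows, exclusion of loops and monochrome-parallel pairs via minimality, and the Maxwell count from Theorem \ref{t:topcount}) are exactly the bookkeeping the paper leaves implicit when it says the theorem follows ``from these observations.''
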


For other surfaces the distinction between geodesic and Euclidean constraints leads to differing combinatorics and we now consider this in the case of the cylinder. The \emph{rigidity matrix} $R_\Y(G,p)$ for a framework $(G,p)$ on the cylinder $\Y$ is the $|E|+|V|\times 3|V|$ matrix where the first $|E|$ rows correspond to the usual rigidity matrix $R(G,p)$ {(with its rows partitioned into the two types of constraint), and the final $|V|$ rows form a block diagonal matrix with non-zero entries given by normal vectors to the cylinder (as in \cite{nix-owe-pow-1,nix-owe-pow-2})}. The rank of $R_\Y(G,p)$ is at most $3|V|-2$ and, {by Theorem \ref{t:rigidityequivalence}}, we may say that $(G,p)$ is \emph{rigid on $\Y$} when the rank is exactly $3|V|-2$.
 
We next state the main theorem of Section \ref{s:cylinder}.

\begin{thm}\label{thm:cylinder}
Let $G$ be a bi-coloured multigraph.
A completely regular framework $(G,p)$ is minimally rigid on $\Y$ if and only if $G$ is $(2,2)$-tight.
\end{thm}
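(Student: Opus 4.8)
The plan is to prove the two implications separately: the necessity of $(2,2)$-tightness follows quickly from the Maxwell-type count of Theorem \ref{t:topcount}, while sufficiency rests on a recursive construction of the relevant bi-coloured graphs together with a geometric analysis of coloured extension moves.

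For necessity, suppose $(G,p)$ is completely regular and minimally rigid on $\Y$. First I would invoke Theorem \ref{t:topcount}: here $\dim X_0 = 2$, and since the isometries of the cylinder preserving both distance functions are exactly the axial rotations and axial translations, one has $\dim(\F((K(V),p))) = 2$ whenever $|V|\geq 2$, so minimality forces $|E| = 2|V| - 2$. For the sparsity of a subgraph $H=(V',E')$, complete regularity makes the rows of the rigidity matrix indexed by $E'$ independent; reading these as the rigidity matrix of the subframework on $V'$, their rank is at most $2|V'| - \dim(\F((K(V'),p)))$, and as every framework on at least two vertices retains the two ambient rigid motions this is bounded by $2|V'|-2$. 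Hence $|E'|\leq 2|V'|-2$, and together with the loop-free, bichromatic-multiplicity restrictions already built into monochrome-simple bi-coloured graphs this gives $(2,2)$-tightness.

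For sufficiency I would induct along a recursive construction of $(2,2)$-tight monochrome-simple bi-coloured multigraphs, starting from a small base graph (two vertices joined by a bichromatic pair of edges) and using coloured Henneberg-type moves: a $0$-extension adding a $2$-valent vertex, a $1$-extension splitting an edge at a new $3$-valent vertex, and whatever surface-type move (for instance a vertex-to-$4$-valent operation) is needed to reach all $(2,2)$-tight graphs, as in the monochrome cylinder analysis, now carrying colour data so that monochrome-simplicity is maintained. Because complete regularity means the rigidity matrix attains its maximal rank over all realisations, at each step it suffices to exhibit \emph{one} placement of the extended graph whose matrix has rank $2|V|-2$; rigidity for all completely regular placements, and its equivalence with continuous rigidity, then follow via Theorem \ref{t:rigidityequivalence}. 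The geometric heart is verifying rank preservation for each move. The genuinely new ingredient, absent on $\S$, is the bichromatic $0$-extension attaching the new vertex $v$ to a single vertex $a$ by one blue and one red edge: here I would compute the Euclidean gradient, the tangential projection of the chord $p(v)-p(a)$, and the geodesic gradient, directed along the developing geodesic, and show that for generic placements on $\Y$ these are linearly independent in the tangent plane $T_{p(v)}$, so the two rows pin the two degrees of freedom at $v$. This independence is precisely what the coincidence of the two flex spaces destroys on the sphere, and it is what makes bichromatic double edges admissible on the cylinder. The $1$-extension and the remaining moves I would treat by the standard cancellation argument, deleting the split edge and checking that an infinitesimal flex of the extension restricts to a flex of its predecessor.

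I expect the main obstacle to be concentrated in the geometry of the inductive step. Obtaining a clean recursive construction that respects the colouring and reaches every $(2,2)$-tight \emph{bi-coloured} graph may require colour-aware versions of the surface moves and a careful enumeration of base cases, but this largely parallels the uncoloured cylinder analysis. The more serious difficulty is that the geodesic separation $d_g$ is given only implicitly, so verifying rank preservation demands an explicit handle on the geodesic-distance gradient on $\Y$; the delicate points are the independence of the blue/red gradient pair at a shared vertex and the exclusion of degenerate placements in each extension. These geodesic gradient computations are where I would expect the real work to lie.
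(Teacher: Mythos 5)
Your high-level architecture (necessity via Theorem \ref{t:topcount}, sufficiency by induction over a coloured recursive construction plus move-by-move rigidity preservation) is the paper's, and your necessity argument is essentially correct. The gaps are both in the sufficiency half, and they sit exactly where you claim the work is routine.

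First, the recursive construction you defer to (``whatever surface-type move is needed \dots largely parallels the uncoloured cylinder analysis'') is in fact the main combinatorial content of the paper, Theorem \ref{thm:22restricted}, and it does \emph{not} parallel the uncoloured case: the paper notes explicitly that existing $(2,2)$-tight constructions cannot be used because each monochrome subgraph must remain simple. Even uncoloured, $0$- and $1$-extensions alone do not generate all $(2,2)$-tight graphs; one needs vertex splitting and graph substitution moves, and their coloured versions must be colour-restricted (the new edges inherit the colour of the subdivided/split edge) for the later geometry to work. Moreover your single base case is wrong: every graph generated from a bichromatic doubled edge by colour-respecting moves retains an edge of each colour, so $K_4$ coloured with at least five edges of one colour is irreducible under all of these moves and must be taken as a second base case, checked rigid on $\Y$ directly. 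Without analogues of Lemmas \ref{lem:contractions} and \ref{lem:plusone} and Theorem \ref{thm:22restricted}, your induction never gets started.

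Second, your geometric effort is aimed at the wrong move. The bichromatic $0$-extension you call ``the genuinely new ingredient'' is elementary --- the paper dispatches all $0$-extensions in a sentence --- and no implicit geodesic-gradient computation is needed anywhere: on the cylinder, geodesic distances become planar Euclidean distances upon unrolling to the covering space, which is how Lemma \ref{lem:cr1ext} handles the red case. The genuinely delicate moves are $1$-extensions of \emph{mixed} colour, where the ``standard cancellation argument'' needs a special position of the new joint such that every flex of the two new bars is a flex of the deleted bar; with one blue and one red replacement bar no such position is available in general. The paper avoids this entirely by restricting its construction to monochrome $1$-extensions, colour-restricted vertex splits and graph extensions, so that the geometric lemmas of \cite{nix-owe-pow-1} and \cite[Lemmas 4.2, 5.1]{nix-owe-pow-2} apply verbatim (Lemmas \ref{lem:cr1ext}, \ref{lem:crvs}, \ref{lem:gext}); its closing Remark makes precisely the point that allowing unrestricted colour cases would force the case-by-case geometric analysis your proposal leaves unaddressed (compare Lemmas \ref{lem:coloured1extension} and \ref{l:coloured1extensionSpecialCase}, which are needed in the genuinely mixed setting of Section \ref{s:EuclideanAndNon}).
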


\subsection{$(2,2)$-tight bi-coloured graphs}\label{s:22tightbicoloured}

Our key tool, which is of independent interest, is an inductive construction of bi-coloured $(2,2)$-tight graphs. 
Note 
that we cannot immediately use recursive constructions of $(2,2)$-tight multigraphs, or simple graphs, appearing in the literature \cite{nix-owe-pow-1,Tay} because of the requirement that each monochrome subgraph is simple. 

First we define the relevant coloured graph construction moves.


A \emph{0-extension} (or Henneberg 1 move) adds a vertex $v$ and two edges incident to $v$. The new vertex $v$ can have two neighbours (with the two edges having arbitrary colours), or one neighbour (with one edge of each colour). 
We refer to the reverse operation as a \emph{0-reduction}.

 A \emph{1-extension} (or Henneberg 2 move) deletes an edge  $xy$ and adds a new vertex $v$ of degree 3 which is adjacent to $x$ and $y$. The third edge incident to $v$ can be incident to $x$ if the two parallel edges $xv$ have distinct colours. We refer to the reverse operation as a \emph{1-reduction}.

We say that a 1-extension deleting the edge $xy$ and adding a new vertex $v$ is \emph{colour-restricted} if the colour $c$ of $xy$ is the colour of $xv$ and of $yv$. In this section we only apply colour limited 1-extensions that result in a new degree 3 vertex with three distinct neighbours.

Define a \emph{colour-restricted vertex split} to be {}the following restricted version of} the usual vertex splitting operation {\cite{W}}. This move splits a vertex $v$ into two vertices $v_1,v_2$, adds the edge $v_1v_2$, chooses a neighbour $x$ of $v$ and replaces the edge $xv$ by two edges $xv_1,xv_2$ and then replaces all other edges $yv$ with either $yv_1$ or $yv_2$. Also we require  the additional restriction that the colour $c$ of $xv$ is the colour of the three edges $v_1v_2,xv_1,xv_2$. We refer to the reverse move as \emph{edge contraction}.

Define a \emph{graph extension} by a graph $H$ with $f(H)=2$ on a vertex $v$ to form a new graph by deleting $v$, adding $H$ disjoint from $G$ and replacing each edge $xv$ with an edge $xy$ for some $y\in H$. (We make no colour restriction on graph extensions.) We refer to the reverse move by \emph{graph contraction}. 

Note that $H$ may be a multigraph, e.g. the graph consisting of two vertices with one red and one blue edge between them. In this specific case if one of the two vertices of $H$ has degree 3 then this is a 1-extension. Hence graph extension is a generalisation of this type of 1-extension. A similar remark, with one type of 0-extension, applies when one of the vertices has degree 2. 

Let $G$ be $(2,2)$-tight. Then a graph, or edge, contraction on $G$ is allowable if the resulting graph is $(2,2)$-tight.

\begin{lem}\label{lem:contractions}
Let $G=(V,E)$ be a bi-coloured graph which is $(2,2)$-tight.\\
(a) Suppose $H$ is a proper subgraph of $G$ with $f(H)=2$. Then there is an allowable graph contraction on $H$ unless there is a vertex $u$ in $G-H$ and two vertices $a,b$ in $H$ with $(au,c),(bu,c)\in E$. \\
(b) Suppose $xy$ is an edge contained in an induced subgraph isomorphic to $K_3$ on $x,y,z$ with coloured edges $(xy,c),(xz,c),(yz,c)$. Then there is an allowable $K_3$ contraction on $xy$ unless there is a vertex $a\in V(G-K_3)$ with two edges of the same colour to the $K_3$ subgraph or there is a subgraph $Y$ with $f(Y)=2$, $x,y\in Y$ and $z\notin Y$.
\end{lem}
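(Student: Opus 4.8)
The plan is to prove both statements by a direct counting argument, tracking how the sparsity count $f$ behaves under the contraction moves and using the $(2,2)$-tightness of $G$ to locate the obstructions precisely. Throughout I write $f(H) = 2|V(H)| - |E(H)|$ for a subgraph $H$, so that $(2,2)$-sparsity reads $f(H) \ge 2$ for all nonempty $H$, and $(2,2)$-tightness means $f(G) = 2$ together with sparsity. The key structural fact I would use repeatedly is the submodularity-type inequality for the count: for subgraphs $A, B$,
\[
f(A \cup B) + f(A \cap B) \le f(A) + f(B) + (\text{edges between } A \setminus B \text{ and } B \setminus A),
\]
and in particular $f(A \cup B) + f(A \cap B) \le f(A) + f(B)$ whenever there are no edges running between $A \setminus B$ and $B \setminus A$. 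A subgraph $K$ with $f(K) = 2$ will be called \emph{tight}; the family of tight subgraphs is closed under union and intersection whenever they overlap in at least one vertex, and this closure property is what makes the obstructions identifiable.

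For part (a), the plan is as follows. The graph contraction on $H$ replaces $H$ by a single vertex $y_0$ and reroutes every edge $xv$ (with $x \notin H$) to $x y_0$. First I would verify the count is preserved: since $f(H) = 2$ and contracting $H$ to a point removes $|V(H)| - 1$ vertices and $|E(H)|$ internal edges, a direct computation gives $f(G/H) = f(G) = 2$, so the contracted graph has the correct total count. The substance is therefore showing that sparsity is preserved, i.e.\ that no subgraph of $G/H$ violates $f \ge 2$, \emph{unless} the stated obstruction occurs. A violation in $G/H$ would correspond to a subgraph $K$ of $G$ with $f(K) < 2$ after rerouting; pulling this back, the only way the count can drop is if two distinct external edges into $H$ get identified into parallel edges at $y_0$ — but since $G$ is simple in each colour, true parallelism is forbidden except when two edges of the \emph{same} colour $c$ land on the contracted vertex from a common external vertex $u$. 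This is exactly the excluded configuration: a vertex $u \in G - H$ with $(au,c),(bu,c) \in E$ for $a,b \in H$. I would argue that in the absence of this configuration, every pullback subgraph satisfies the count, using the union–intersection inequality to push a hypothetical tight violator $Y$ containing $y_0$ back to a tight subgraph of $G$ containing all of $H$, which then forces the desired sparsity.

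For part (b), the $K_3$-contraction on the monochrome triangle $xyz$ is the special case of an edge/vertex contraction, and the argument is parallel but with two distinct obstructions to track. Contracting the edge $xy$ inside a monochrome $K_3$ merges $x, y$ into one vertex and deletes one edge, preserving the total count $f = 2$. Again sparsity is the issue. The first obstruction — a vertex $a \in V(G - K_3)$ with two same-coloured edges to the triangle — is the monochrome-parallelism obstruction already seen in (a): contraction would create forbidden monochrome parallel edges at the merged vertex. The second obstruction — a tight subgraph $Y$ with $x, y \in Y$ but $z \notin Y$ — is the genuinely new phenomenon: such a $Y$ would, after contraction, either double an edge or drop below the sparsity bound because $xy$ is being absorbed while $Y$ already achieves equality. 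I would show that if \emph{neither} obstruction is present, then any candidate violator $K$ in the contracted graph lifts to a subgraph of $G$ whose count, corrected for the contraction, stays $\ge 2$; here I would split into cases according to whether the lift of $K$ contains $z$, invoking the closure of the tight family under intersection to derive a contradiction with $(2,2)$-tightness of $G$ in the case $z \notin Y$.

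The main obstacle I anticipate is the bookkeeping in the sparsity-preservation step, specifically the case analysis in (b) for subgraphs $K$ of the contracted graph whose preimage interacts with the triangle only partially (containing $z$ but not both of $x,y$, or vice versa). The delicate point is correctly counting how many edges of the triangle and how many external edges are absorbed, and confirming that the \emph{only} ways to break sparsity are the two listed obstructions — this requires care because $x,y,z$ may each carry further edges of both colours, and the monochrome-simplicity requirement interacts nontrivially with the count. I expect the union–intersection inequality together with the maximality/closure properties of tight subgraphs to resolve these cases cleanly, but verifying that the two stated exceptions are \emph{exhaustive} is where the real work lies.
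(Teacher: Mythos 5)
Your overall strategy --- contract, check that the count is preserved, and pull any violating subgraph of the contracted graph back to a violating subgraph of $G$ --- is exactly the paper's proof, so the architecture is right. But in part (a) you misdiagnose what the exceptional configuration obstructs. Under a graph contraction no edges are ``identified'': every edge from a vertex $u\notin H$ into $H$ survives as an edge to the new vertex, so the count never drops and $(2,2)$-sparsity of $G/H$ holds \emph{unconditionally}. Indeed, any $Y'\subseteq G/H$ with $f(Y')<2$ containing the contracted vertex pulls back, by adjoining all of $H$, to $Y\subseteq G$ with $f(Y)=f(Y')+f(H)-2=f(Y')<2$, contradicting sparsity of $G$; no case distinction is needed and the union--intersection inequality plays no role. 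What the configuration $(au,c),(bu,c)$ with $a,b\in H$ actually violates is not the count --- a doubled edge spans a subgraph with $f=4-2=2$, which is perfectly $(2,2)$-sparse --- but the standing requirement that graphs in this class be \emph{monochrome simple}. So the dichotomy in (a) is ``sparse always; monochrome simple unless the configuration occurs,'' not ``count drops iff the configuration occurs,'' and a proof written around rescuing the count would be arguing against a non-problem.

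In part (b) there is a genuine bookkeeping error: you say the contraction ``merges $x,y$ into one vertex and deletes one edge, preserving the total count $f=2$.'' Losing one vertex and one edge changes $f$ by $-1$, so the graph you describe has $f(G')=1$ and could never be $(2,2)$-tight; the lemma would be vacuous under that reading. The edge contraction (the inverse of the colour-restricted vertex split) deletes \emph{two} edges: $xy$ is removed and the pair $xz,yz$ is identified into a single $c$-coloured edge at the merged vertex. With that correction the pullback argument runs as in (a): a violator $Y'$ of $G'$ not already a subgraph of $G$ must contain the merged vertex, and its preimage $Y$ (containing $x$, $y$ and the edge $xy$) satisfies $f(Y)=f(Y')$ when $z\in Y'$ and the merged edge is present (it pulls back to both $xz$ and $yz$), but $f(Y)=f(Y')+1$ when $z\notin Y'$; sparsity of $G$ then forces the latter case with $f(Y)=2$, which is precisely the second listed obstruction, while monochrome parallel edges at the merged vertex account for the first. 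Finally, the ``partial interaction'' case you flag as the main anticipated obstacle does not arise: a subgraph of $G'$ either misses the merged vertex, in which case it is literally a subgraph of $G$, or contains it, in which case its preimage contains both $x$ and $y$; and since the triangle is induced there are no further edges among $x,y,z$ to track. With these two repairs your outline collapses to the paper's short argument.
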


\begin{proof}
(a) It is easy to see that $f(G/H)=f(G)$ and that any $Y'\subset G/H$ with $f(Y)<2$ would imply a corresponding $Y\subset G$ (such that $Y/H=Y'$) with $f(Y)<2$. $G/H$ is monochrome simple unless $G/H$ contains two parallel edges of the same colour which implies the claim.

(b) Suppose the graph resulting from the edge contraction on $xy$ is denoted by $G'$. $G'$ is $(2,2)$-sparse provided it has no subgraph $Y'$ with $f(Y')<2$. $Y'$ is also a subgraph of $G$ unless it derives from a subgraph $Y\subset G$ with $xy\in Y$ and $f(Y')<f(Y)$ only if $z\notin Y$. It follows that $G'$ is $(2,2)$-tight unless it contains parallel edges of the same colour giving the result.
\end{proof}

A $(2,3)$-circuit is a (multi)graph $G$ with $|E|=2|V|-2$ and $|E'|\leq 2|V'|-3$ for any proper subgraph with at least one edge.

\begin{lem}\label{lem:plusone}
Let $G$ be $(2,2)$-tight. Then either:\\
(a) $G$ is a $(2,3)$-circuit;\\
(b) $G$ has a proper subgraph $H$ with $f(H)=2$ such that any vertex $u\in V(G)-V(H)$ is either adjacent to at most one vertex in $V(H)$ or is adjacent to two but the two edges have different colours; or\\
(c) $G$ has a vertex of degree 2.
\end{lem}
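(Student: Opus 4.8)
The plan is to turn the three alternatives into a clean dichotomy and then run an extremal (maximal tight subgraph) argument whose only real content is a short counting step. First I would dispose of the two ``easy'' alternatives. If $G$ has a vertex of degree $2$ we are immediately in case (c); note that $(2,2)$-sparsity forces every vertex to have degree at least $2$, since deleting an isolated or pendant vertex would leave a subgraph with $f<2$, so a degree-$2$ vertex is the only low-degree escape. If instead $G$ is a $(2,3)$-circuit we are in case (a). Hence I would assume from now on that $G$ has no degree-$2$ vertex and is not a $(2,3)$-circuit, and aim to establish (b).

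The first observation is that failing to be a $(2,3)$-circuit manufactures the raw material for (b). Since $G$ is $(2,2)$-tight we have $|E|=2|V|-2$, so the only way $G$ can violate the circuit definition is that some proper subgraph with at least one edge fails $|E'|\le 2|V'|-3$, i.e. some proper subgraph $H_0$ has $f(H_0)\le 2$. Together with $(2,2)$-sparsity, which gives $f(H_0)\ge 2$, this forces $f(H_0)=2$. Thus the family of proper subgraphs with $f=2$ and at least one edge is nonempty. I would also record the standard fact that any such tight subgraph is induced: if it omitted an edge of $G$ lying inside its vertex set, the induced subgraph on the same vertices would have $f<2$, contradicting sparsity.

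The core is then an extremal choice. Let $H$ be a proper subgraph with $f(H)=2$ and at least one edge, chosen with $|V(H)|$ maximal; by the previous paragraph such an $H$ exists and has at least two vertices. I claim $H$ witnesses (b), that is, no vertex $u\in V(G)\setminus V(H)$ has two same-coloured edges into $H$ --- which is exactly the negation of the exceptional clause of Lemma \ref{lem:contractions}(a). Suppose some such $u$ existed, with same-coloured edges to distinct vertices $a,b\in V(H)$ (distinct, since monochrome subgraphs are simple). Form $H'=G[V(H)\cup\{u\}]$. If $u$ sends $k\ge 2$ edges into $H$ then $f(H')=f(H)+2-k=4-k$, and $(2,2)$-sparsity forces $f(H')\ge 2$, so $k=2$ and $f(H')=2$. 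Thus $H'$ is again a tight subgraph with an edge, now with strictly more vertices than $H$.

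To finish I would rule out both fates of $H'$. If $H'$ is proper it contradicts the maximality of $|V(H)|$. If instead $V(H')=V(G)$, then since $H'$ is induced and $f(H')=2=f(G)$ we get $H'=G$; but $u$ has exactly $k=2$ edges in $H'=G$ and carries no loops (loops are excluded by $(2,2)$-sparsity), so $u$ has degree $2$ in $G$, contradicting our standing assumption. Either way we reach a contradiction, so no offending $u$ exists and (b) holds. The step I expect to be the main obstacle, and would check most carefully, is precisely this boundary case $H'=G$: it is where the ``no degree-$2$ vertex'' hypothesis is consumed, and one must be sure that a \emph{maximal} tight set really absorbs every admissible vertex rather than merely a convenient one. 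The remaining bookkeeping --- that the displayed colour condition in (b) is literally the complement of the clause in Lemma \ref{lem:contractions}(a), including the subcase where $u$ meets a single vertex of $H$ by two differently-coloured edges --- is routine.
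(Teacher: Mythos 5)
Your proof is correct and is essentially the paper's own argument: both hinge on the observation that a vertex sending two same-coloured edges into a tight subgraph sends exactly two edges (by $(2,2)$-sparsity), so absorbing it yields a larger tight subgraph, and that this absorption can only stop at a proper subgraph witnessing (b) or at $G$ itself, producing a degree-2 vertex. The only cosmetic difference is that you run the saturation as a maximal-proper-tight-subgraph contradiction under the assumptions that (a) and (c) fail, whereas the paper grows $H$ iteratively by monochrome 0-extensions and reads off the trichotomy directly.
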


\begin{proof}
Suppose (a) fails. Then $G$ has a proper subgraph $H$ with $f(H)=2$. Let $H'\supset H$ be the smallest $(2,2)$-tight graph containing $H$ such that there is no vertex $u$ in $G-H'$ and two vertices $a,b$ in $H'$ with $au,bu$ both having the same colour. Then {since such vertices $u$ have at most $2$ edges to $H$} either (b) holds or $H'=G$. If (b) fails then $H'$ can be formed from $H$ by applying a sequence of {monochrome} 0-extensions  giving (c). 
\end{proof}

\begin{thm}\label{thm:22restricted}
A bi-coloured multigraph $G$ is $(2,2)$-tight if and only if $G$ can be generated from $K_1$ or $K_4$ (where the colouring has at least 5 edges of the same colour) by 0-extensions, colour-restricted 1-extensions, colour-restricted vertex splits and graph extensions.
\end{thm}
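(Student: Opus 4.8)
The plan is to prove both implications by pairing each construction move with its inverse reduction and inducting on $|V|$. For sufficiency I would first confirm that the two base graphs lie in the class: $K_1$ is trivially $(2,2)$-tight and monochrome simple, and $K_4$ with at least five edges of one colour has $|E|=6=2|V|-2$, is $(2,2)$-sparse (any subgraph on three vertices has at most three edges), and is monochrome simple because the minority colour carries at most one edge. I would then verify that each move preserves $(2,2)$-tightness and monochrome-simplicity. In every case the count $f=2|V|-|E|$ is unchanged, preservation of $(2,2)$-sparsity for $0$-extensions, colour-restricted $1$-extensions and graph extensions is the standard Henneberg/graph-extension argument (using $f(H)=2$ for the latter), vertex splitting is count-neutral and sparsity-preserving, and in each case the colour restriction is exactly what prevents the creation of a monochrome parallel edge.

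For necessity I would induct on $|V|$, the base being $K_1$ or $K_4$, and for larger $G$ in the class apply the trichotomy of Lemma~\ref{lem:plusone}. In case (c) a degree-$2$ vertex admits a $0$-reduction, and as this only deletes a vertex and two edges the result stays in the class, so induction applies. In case (b) the subgraph $H$ with $f(H)=2$ is precisely set up to avoid the exceptional configuration in Lemma~\ref{lem:contractions}(a), giving an allowable graph contraction; since $H$ has an edge it has at least two vertices, so the contraction strictly reduces $|V|$ and induction applies.

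The substance is case (a), where $G$ is a $(2,3)$-circuit. Here the circuit property forbids any proper subgraph with $f\le 2$; in particular it forbids a degree-$2$ vertex (deleting one would leave $f=2$) and any parallel edge (a double edge spans a two-vertex subgraph with $f=2$), so $G$ is simple with minimum degree $3$, and since $|E|=2|V|-2$ forces average degree below $4$ there is a vertex $v$ of degree exactly $3$ with three distinct neighbours. Two of its edges share a colour $c$, and I would attempt the colour-restricted $1$-reduction deleting $v$ and joining those neighbours $x,y$ by a new $c$-edge. Crucially $(2,2)$-sparsity of the outcome is automatic in the circuit case: every subgraph of $G-v$ is a proper subgraph of $G$ and so has $f\ge 3$, whence adding one edge keeps $f\ge 2$. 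Thus the move fails only if $xy$ is already a $c$-edge, i.e. $\{v,x,y\}$ spans a monochrome $c$-triangle. If all three edges at $v$ have colour $c$ and all three reductions are blocked in this way, then $v$ and its neighbours span a monochrome $K_4$, which by the circuit property must be all of $G$---the base case. Otherwise I would switch to an edge contraction on the monochrome triangle just produced; this is allowable by Lemma~\ref{lem:contractions}(b) because the circuit hypothesis already removes that lemma's second obstruction, leaving only its first obstruction and the requirement of not creating a monochrome parallel edge to be settled by a short analysis of the colours around $v$, which either yields a valid contraction or forces $G$ to be one of the exceptional $K_4$'s.

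The main obstacle I anticipate is exactly this colour bookkeeping in case (a): showing that whenever the natural $1$-reduction is blocked the fallback edge contraction is simultaneously available (no outside vertex sending two like-coloured edges into the triangle) and class-preserving, and that the only graphs resisting every move are the claimed copies of $K_4$.
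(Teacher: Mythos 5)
Your overall strategy coincides with the paper's: both directions via move/inverse pairing, the trichotomy of Lemma~\ref{lem:plusone}, graph contraction via Lemma~\ref{lem:contractions}(a) in case (b), $0$-reduction in case (c), and, in the circuit case, a colour-restricted $1$-reduction with an edge-contraction fallback. The gap is in the endgame of case (a), exactly where you defer to ``a short analysis of the colours around $v$.'' Suppose the degree-$3$ vertex $v$ has edges $(vx,c)$, $(vy,c)$, $(vz,c')$ with $c'\neq c$, and the unique colour-restricted $1$-reduction is blocked because $(xy,c)\in E$. Then every fallback contraction on the monochrome triangle $\{v,x,y\}$ can be blocked too: contracting $vx$ (apex $y$) fails exactly when $(xz,c')\in E$, contracting $vy$ (apex $x$) fails exactly when $(yz,c')\in E$, and contracting $xy$ (apex $v$) fails when some vertex sends two same-coloured edges to $x$ and $y$. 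If $(xz,c')$ and $(yz,c')$ both exist, the circuit property forces $G$ to be the $K_4$ consisting of the $c$-coloured triangle on $\{v,x,y\}$ plus the $c'$-coloured star at $z$. This is a $K_4$ with three edges of each colour, which is \emph{not} one of the exceptional $K_4$'s of the theorem (those need at least five edges of one colour), so your assertion that the blocked configuration ``forces $G$ to be one of the exceptional $K_4$'s'' is false. (Your all-one-colour subcase is fine: there the terminal configuration really is the monochrome $K_4$.)

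The triangle-plus-star $K_4$ is in fact reducible, but only by moving to a different vertex: a $1$-reduction at the star vertex $z$ adds $(xy,c')$, which is parallel only to the differently coloured $(xy,c)$ and so keeps the graph monochrome simple. Your induction, anchored at the single vertex $v$, never finds this, so as written it terminates at a graph that is neither reducible by your moves nor an allowed base graph; relatedly, your phrase ``the base being $K_1$ or $K_4$'' must not be read as admitting all colourings of $K_4$ as bases. The paper forecloses this problem by disposing of $K_4$ at the outset (any $K_4$ with at least two edges of each colour admits a colour-restricted $1$-reduction), after which a $(2,3)$-circuit $G\neq K_4$ contains no copy of $K_4$ at all; this allows one to choose two neighbours $x,y$ of $v$ with $xy\notin E$, and then the fallback contraction is automatically unobstructed, because the only candidate obstruction vertex is the third neighbour and $xy\notin E$ rules it out. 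Your argument can be repaired either by adopting that preliminary $K_4$ step and non-adjacency choice, or by explicitly identifying your terminal configuration as the triangle-plus-star $K_4$ and reducing it at $z$.
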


\begin{proof}
Let $G$ be $(2,2)$-tight. Suppose $G=K_4$. If there are at least two edges of each colour, it is easy to check that there is a colour-restricted 1-reduction on $G$ resulting in a $(2,2)$-tight graph.
Hence, by the assumptions of the theorem we may suppose $G$ is neither $K_1$ nor $K_4$.

Consider the possibilities given in Lemma \ref{lem:plusone}. If (b) holds then we can use Lemma \ref{lem:contractions}(a) to contract a proper $(2,2)$-tight subgraph. If (c) holds then we can use 0-reduction. Hence we may suppose $G$ is a $(2,3)$-circuit. It follows that $G$ has minimum degree 3 and every degree 3 vertex has 3 neighbours. Note also that, since $G$ is a $(2,3)$-circuit, $f(H)\geq 3$ for any proper $H\subset G$ with $|V(H)|\geq 2$.

Let $v$ be a degree 3 vertex with neighbours $x,y,z$. Since $G$ is a $(2,3)$-circuit distinct from $K_4$, $G$ contains no copy of $K_4$. 
Thus without loss of generality we may suppose that $xy\notin E$. 

Suppose first that the edges $xv,yv$ have the same colour $c$.
Now consider $G'=G-v+xy$ (with the colour $c$). $G'$ is $(2,2)$-tight unless $G$ contained a subgraph $H$ containing $x,y$ but not $v,z$
 with $f(H)=2$, but this would contradict the fact that $G$ is a $(2,3)$-circuit. Hence $G'$ is $(2,2)$-tight. Since $(xy,c)\notin E$, $G'$ is also monochrome simple. 

It remains to consider the case, without loss of generality, when the edges incident to $v$ are $(xv,b)$, $(yv,r)$ and $(zv,b)$. Here we consider the graph $G'=G-v+xz$.  By the argument in the previous paragraph it follows that if $G'$ is not $(2,2)$-tight then $(xz,b)\in G$. Now $\{x,z,v\}$ induces a monochrome triangle (note if $(xz,r)\in E$ then we contradict the fact $G$ is a $(2,3)$-circuit). We can now use Lemma \ref{lem:contractions}(b) and the facts that $v$ has degree 3 and $G$ is a $(2,3)$-circuit to complete the proof. 
\end{proof}

\subsection{Special position arguments}

In the next section, and in subsequent sections for other contexts, 
we show that the graph operations of the inductive construction preserve the infinitesimal rigidity of completely regular frameworks. For a 0-extension move this is elementary. However, for $1$-extension moves we shall make use of special position arguments  which are specific to the geometry of the context.

We first recall the special position argument for infinitesimal rigidity preservation for $1$-extensions in the case 
of traditional bar-joint frameworks in the plane, and in Definition \ref{d:special3joints} we introduce some convenient terminology.

Let $(G,p) \to (G',p')$ be a $1$-extension move
in which the bar $p(e_{12})$, incident to joints $p_1$ and $p_2$ in $(G,p)$, is replaced by three bars
$p'(e_{01}'), p'(e_{02}'), p'(e_{03}') $ to create $(G',p')$. Thus the framework move is determined by the removal of the bar $p(e_{12})$, the positioning of a new joint $p'_0$,  and the addition of bars from $p_0'$ to $p_1, p_2, p_3$.  In the case of Euclidean frameworks in the plane, let us say that $(G', p')$ is in a \emph{special position} if  $p_1, p_2, p_3$ are not colinear, and $p_0', p_1, p_2$ are colinear.

Suppose now that $(G,p)$ is  regular and that $(G', p')$ is in special position, as above. We show that if  $(G', p')$ is infinitesimally flexible then so too is $(G,p)$. This completes the proof, since the infinitesimal flexibility of a regular framework for $G'$ implies the infinitesimal flexibility of $(G',p')$.

By assumption the
special position framework  is infinitesimally flexible with a nonrigid motion infinitesimal flex $u'$. By the colinearity condition for the special position the velocities $u_1'$ and $u_2'$ give an infinitesimal flex $(u_1', u_2')$ of the bar $p(e_{12})$ in $(G,p)$. Thus, the restriction, $u$ say, of $u'$ to the joints of $(G,p)$ determines an infinitesimal flex of $(G,p)$. Suppose, by way of contradiction, that this is a rigid motion flex. Then there is a corresponding rigid motion flex, $u'_{\rm rig}$ say, of $(G',p')$. Subtracting this flex from $u'$ we arrive at a nonrigid motion infinitesimal flex of $(G',p')$ which has zero velocities on all joints except $p_0$. Since $p_1, p_2$ and $p_3$ are not colinear, by the special position condition, this is a contradiction, as required.

Note that the essential part of this argument is that starting with a typical bar $p_1p_2$ we are able to find a special position for a third joint $p_0$ with the property that {any} infinitesimal flex of the 2-bar framework with bars $p_0p_1, p_0p_2$ is automatically an infinitesimal flex of the bar $p_1p_2$. This motivates the following definition for a general double-distance context.

\begin{defn}\label{d:special3joints} Let $\T$ be a  bar-joint framework triangle  for the double-distance context $(X, d_b, d_r)$, with distinct joints $p_0, p_1, p_2$ and bars $p_{ij}$ between  $p_i, p_j$. Then the pair $(\T, p_0)$ is in special position if every infinitesimal flex of the $2$-bar subframework with bars $p_{01}, p_{02}$  is also an infinitesimal flex of the bar $p_{12}$.
\end{defn}

In many cases we shall be able to use a special position infinitesimal flex argument as above, or a limiting variant of this, as in \cite[Lemma 4.2]{nix-owe-pow-2}, to obtain rigidity preservation. However, as we see in Section \ref{s:EuclideanAndNon}, for some mixed colour cases, such as Lemma \ref{l:coloured1extensionSpecialCase} for example, we need a more involved special position argument.

\subsection{Geometric operations on $\Y$}\label{ss:geometricoperations}

\begin{lem}\label{lem:cr1ext}
Let $G'$ be a colour-restricted 1-extension of a graph $G$ and suppose that $(G,p)$ is a completely regular minimally rigid framework on $\Y$. Then any completely regular realisation of $G'$ is minimally rigid on $\Y$.
\end{lem}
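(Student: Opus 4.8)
The plan is to prove that the completely regular realisation of $G'$ is infinitesimally rigid on $\Y$, after which minimal rigidity follows from the count: a colour-restricted $1$-extension adds one vertex and a net two edges, so $|E'|=2|V'|-2$, and the move preserves $(2,2)$-sparsity and monochrome simplicity, whence $G'$ is $(2,2)$-tight. Since complete regularity means the rigidity matrix $R_\Y$ attains its maximal rank, and since the rank can only drop under specialisation while never exceeding $3|V'|-2$, it suffices to exhibit a single realisation of $G'$ on $\Y$ for which $R_\Y$ has rank $3|V'|-2$. I would organise this by contraposition, following the template set up before Definition~\ref{d:special3joints}: if the completely regular $(G',p')$ were infinitesimally flexible, then any special-position realisation would also be flexible (its rank being no larger), and from such a flex I shall manufacture a non-rigid infinitesimal flex of $(G,p)$, contradicting the rigidity of $(G,p)$.

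Recall that the move deletes an edge $xy$ of colour $c$ and introduces a degree-$3$ vertex $v$ with edges $vx,vy$ of colour $c$ and a third edge $vz$ of arbitrary colour, with $x,y,z$ distinct. Holding the completely regular placement $p$ of $G$ fixed, I would place $v$ in a special position in the sense of Definition~\ref{d:special3joints}, namely on the colour-$c$ curve through $p_x$ and $p_y$ (the relevant Euclidean chord direction for $c=b$, the geodesic for $c=r$). At such a position the gradients at $v$ of the two constraints $vx$ and $vy$ are parallel, both pointing along the colour-$c$ curve, and the usual combination then shows that every infinitesimal flex of the two-bar subframework on $vx,vy$ restricts to an infinitesimal flex of the bar $xy$. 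On the curved cylinder an exact on-curve placement is unavailable for the Euclidean colour, since a non-ruling line meets $\Y$ in at most two points and hence three points of $\Y$ are not generically collinear in $\bR^3$; in that case I would use the limiting variant, letting $v$ tend to the relevant position while normalising the putative flexes and passing to a convergent subsequence, exactly as in \cite[Lemma~4.2]{nix-owe-pow-2}.

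Given this special position, suppose $(G',p')$ carries an infinitesimal flex $u'$ that is not a rigid motion. By the special-position property the restriction $u=u'|_{V(G)}$ is an infinitesimal flex of $(G,p)=G'-v+xy$, and since $(G,p)$ is rigid on $\Y$ it is a rigid-motion flex. Extending the corresponding motion of $\Y$ to $v$ and subtracting it from $u'$ yields a non-rigid flex $u''$ of $(G',p')$ that vanishes on every vertex of $G$. The three bars $vx,vy,vz$ then impose three linear conditions on the tangent velocity $u''_v$ in the two-dimensional tangent plane at $p_v$: in special position the gradients of $vx$ and $vy$ are parallel and span a single direction, while the gradient of the third constraint $vz$ is transverse to it, so the three conditions span the whole tangent plane and force $u''_v=0$. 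But then $u''\equiv 0$ is a rigid motion, contradicting its non-rigidity, and the lemma follows.

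The main obstacle is precisely this geometric input on the cylinder: verifying, for each colour $c\in\{b,r\}$, both that a valid special position exists (exactly for the geodesic colour, and only in the limiting sense for the Euclidean colour) and that the third bar $vz$ contributes a gradient at $v$ transverse to the common colour-$c$ direction. The transversality is where complete regularity of $(G,p)$ enters, to exclude accidental alignments of the $vz$-gradient with the colour-$c$ curve; and the Euclidean case is expected to be the more delicate one, since it requires the limiting argument rather than an exact placement, with the mixed possibilities for the colour of $vz$ (Euclidean versus geodesic) needing separate verification of the transversality.
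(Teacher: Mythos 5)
Your proposal is correct and follows essentially the same route as the paper: the paper likewise splits on the colour of the deleted edge $xy$, using an exact special position on the geodesic (justified by unwrapping the cylinder to its planar cover) when $xy$ is red, and invoking the limiting argument of \cite[Lemma 4.2]{nix-owe-pow-2} when $xy$ is blue, with the standard restrict-and-subtract flex argument handling either colour of $vz$. Your write-up simply makes explicit the special-position and transversality details that the paper delegates to its cited references.
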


\begin{proof}
As noted earlier we need only consider colour-restricted 1-extensions resulting in 3 distinct neighbours for the new vertex $v$.

In the case that the replaced edge $xy$ is red, corresponding to a geodesic distance, we note that any placement $p(v)$ of $v$ on the geodesic from $p(x)$ to $p(y)$ gives a 
special position triangle framework for geodesic distances. Indeed, geodesic distances correspond to planar Euclidean distances on unwrapping the cylinder to its planar covering space. Rigidity preservation now follows, for either colour for $zv$, by the usual special position argument \cite{nix-owe-pow-1}.

Suppose now that $xy$ is blue. 
Then \cite[Lemma 4.2]{nix-owe-pow-2} applies.
\end{proof}

\begin{lem}\label{lem:crvs}
Let $G'$ be a colour-restricted vertex split of a graph $G$ on a blue edge and suppose that $(G,p)$ is a completely regular minimally rigid framework on $\Y$. Then any completely regular realisation of $G'$ on $\Y$ is minimally rigid on $\Y$.
\end{lem}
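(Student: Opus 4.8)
The plan is to prove infinitesimal rigidity of one conveniently chosen realisation of $G'$ on $\Y$ and then let complete regularity together with the lower semicontinuity of the rank of the rigidity matrix upgrade this to minimal rigidity of every completely regular realisation. First I would record the count: a vertex split adds one vertex and two edges (the new bar $v_1v_2$, plus the replacement of $xv$ by the pair $xv_1,xv_2$), so $|E'|=|E|+2=2|V'|-2$. Consequently, once infinitesimal rigidity is shown the rank of $R_\Y(G',p')$ equals $|E'|$, the edge rows are independent, and $(G',p')$ is automatically minimal. Since the split is on a blue edge, the three new bars $v_1v_2,\,xv_1,\,xv_2$ are all Euclidean, so the local geometry is exactly that of a Euclidean vertex split taking place in the two-dimensional tangent plane $T_{p(v)}\Y$.

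For the special position I would start from the given framework $(G,p)$ and, for small $\epsilon>0$, define $p'_\epsilon$ by placing $v_1,v_2$ at the points $\gamma(\epsilon/2),\gamma(-\epsilon/2)$ of a smooth curve $\gamma$ in $\Y$ with $\gamma(0)=p(v)$ and tangent $\dot\gamma(0)=\hat t\in T_{p(v)}\Y$, while all other joints stay at their $(G,p)$ positions. Here $\hat t$ is chosen transverse to the tangential component $\tau:=\pi_{\mathrm{tan}}(p(v)-p(x))$ of the chord to $x$; we may assume $\tau\neq 0$ and $\hat t\not\parallel\tau$ by the freedom in the completely regular $(G,p)$ and in the choice of $\hat t$. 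Suppose, for contradiction, that every completely regular realisation of $G'$ is flexible; then the maximal rank is sub-maximal, so each special realisation $(G',p'_\epsilon)$ is flexible as well and carries a nontrivial infinitesimal flex $u'_\epsilon$, which I normalise to unit norm and orthogonal to the two-dimensional space of rigid-motion flexes. Passing to a convergent subsequence $u'_\epsilon\to u'_0$ as $\epsilon\to0$ yields a flex $u'_0$ of the limiting coincident configuration in which $v_1,v_2$ both sit at $p(v)$, and $u'_0$ is genuinely nontrivial because the rigid-motion space (generated by axial rotation and translation) varies continuously as the joints coalesce.

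The core computation pins down the difference velocity $w_0:=u'_0(v_1)-u'_0(v_2)\in T_{p(v)}\Y$. Dividing the bar condition for $v_1v_2$ by $\epsilon$ and taking the limit gives $w_0\cdot\hat t=0$, while subtracting the bar conditions for $xv_1$ and $xv_2$ and letting $\epsilon\to0$ gives $w_0\cdot(p(v)-p(x))=w_0\cdot\tau=0$. As $\hat t$ and $\tau$ are independent in the two-dimensional plane $T_{p(v)}\Y$, this forces $w_0=0$, so $u'_0(v_1)=u'_0(v_2)$. Setting $u(v):=u'_0(v_1)$ and $u:=u'_0$ elsewhere then produces a well-defined infinitesimal flex of $(G,p)$, since every edge of $G$ appears in $G'$ with $v$ replaced by $v_1$ or $v_2$ and the corresponding bar conditions agree in the limit. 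Because $(G,p)$ is minimally rigid, $u$ is a rigid-motion flex, whence $u'_0$ is trivial, contradicting its normalisation. Therefore $(G',p'_\epsilon)$ is infinitesimally rigid for small $\epsilon$, and the stated minimal rigidity of every completely regular realisation follows.

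I expect the main obstacle to be the rigorous bookkeeping of this limiting argument on the surface: guaranteeing that the normalised flexes converge to a \emph{nontrivial} limit (which uses continuity of the rigid-motion space near the coincident configuration), that the correctly scaled constraint from the degenerating bar $v_1v_2$ survives the limit, and that the tangential chord component $\tau$ is nonzero so that a transverse tangent direction $\hat t$ exists. These are precisely the places where I would lean on the surface rigidity-matrix machinery of \cite{nix-owe-pow-1,nix-owe-pow-2} and the vertex-split formalism of \cite{W}. The blue colouring is what keeps the entire analysis Euclidean inside $T_{p(v)}\Y$; the geodesic (red) analogue would instead require unwrapping to the planar cover and is not needed here.
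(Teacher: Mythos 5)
Your proposal is correct and takes essentially the same approach as the paper: the paper's proof is a one-line appeal to the vertex-splitting argument of \cite[Lemma 5.1]{nix-owe-pow-2}, made applicable precisely because the blue colour restriction keeps all three new bars Euclidean, and your coalescing-limit argument (normalised flexes at the special positions $\gamma(\pm\epsilon/2)$, the rescaled bar condition giving $w_0\cdot\hat t=0$, the subtracted conditions giving $w_0\cdot\tau=0$, hence $w_0=0$ and descent of the limit flex to $(G,p)$) is that cited argument written out in full. The bookkeeping points you flag — nontriviality of the limit flex via continuity of the two-dimensional rigid-motion space, and genericity ensuring $\tau\neq 0$ — are exactly the details absorbed by the citation.
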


\begin{proof}
Since the operation is blue colour-restricted the argument given in \cite[Lemma 5.1]{nix-owe-pow-2} can be applied  immediately.
\end{proof}

\begin{lem}\label{lem:gext}
Let $H$ be minimally rigid on $\Y$, let $G$ be a graph extension of $G/H$ and suppose that $(G/H,p|_{G/H})$ is a completely regular minimally rigid framework on $\Y$. Then any completely regular framework $(G,p)$ on $\Y$ is minimally rigid.
\end{lem}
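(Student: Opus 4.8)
The plan is to reduce the statement to a rank-maximality fact and then establish that fact by a rigid-substitution limit. First I record the combinatorial bookkeeping: since $f(H)=2$ we have $f(G)=f(G/H)$, exactly as in the proof of Lemma~\ref{lem:contractions}(a), so $G$ is $(2,2)$-tight and in particular $|E(G)|=2|V(G)|-2$. Consequently, by the top count of Theorem~\ref{t:topcount}, it suffices to prove that a completely regular $(G,p)$ is infinitesimally rigid, minimality then being automatic from the edge count. Moreover, because complete regularity means that $R_\Y(G,p)$ attains its maximal rank over all realisations of $G$, infinitesimal rigidity is here a property of $G$ alone; hence it is enough to exhibit a single realisation of $G$ on $\Y$ whose infinitesimal flex space has dimension $2$.

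Next I set up a splitting of the flex space that is valid whenever the subframework on $H$ is rigid. Restriction to $H$ gives a linear map $\F((G,p))\to\F((H,p|_H))$. When $(H,p|_H)$ is infinitesimally rigid its flex space is the $2$-dimensional space $\mathcal R$ of rigid motions of $\Y$ (axial rotation and axial translation), and since these rigid motions are global flexes of $(G,p)$ the map is onto $\mathcal R$; its kernel is the space of flexes that vanish on $H$, that is, the flex space of the \emph{pinned} framework obtained by freezing every joint of $H$. Writing $\F_{\mathrm{pin}}(G,p)$ for the latter, this gives
\[
\dim\F((G,p)) \;=\; \dim\F_{\mathrm{pin}}(G,p)\,+\,2,
\]
so $(G,p)$ is rigid precisely when the pinned framework admits only the zero flex.

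The geometric input is that the pinned quotient is rigid. Applying the same splitting to $(G/H,p|_{G/H})$ with its single contracted vertex $v$ frozen: since $(G/H,p|_{G/H})$ is minimally rigid its flex space is exactly $\mathcal R$, and a rigid motion of $\Y$ that fixes the surface joint $p(v)$ to first order must be trivial, because the rotational and translational velocity fields are linearly independent and nonvanishing at every point of $\Y$. Hence freezing $v$ leaves no nonzero flex, i.e. $\F_{\mathrm{pin}}(G/H,p|_{G/H})=0$.

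Finally I transfer this to $G$ by a limit, which I expect to be the main obstacle. Realise $H$ as a completely regular (hence rigid) configuration confined to a small patch of $\Y$ about $p(v)$, leave the remaining joints at their positions in $(G/H,p|_{G/H})$, and route each edge formerly incident to $v$ to its prescribed endpoint in $H$; call the result $p_\varepsilon$, where $\varepsilon$ measures the diameter of the patch. After freezing $H$, the rows of the pinned rigidity matrix indexed by the edges $xy$ with $y\in H$ are the rows of the bars $\{p(x),p_\varepsilon(y)\}$ in the $x$-columns, and as $\varepsilon\to0$ each $p_\varepsilon(y)\to p(v)$, so the pinned matrix of $(G,p_\varepsilon)$ converges to the pinned matrix of $(G/H,p|_{G/H})$, which has full rank by the previous paragraph. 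By lower semicontinuity of rank the pinned matrix of $(G,p_\varepsilon)$ also has full rank for all small $\varepsilon$, whence $\F_{\mathrm{pin}}(G,p_\varepsilon)=0$ and $\dim\F((G,p_\varepsilon))=2$. This produces the required rigid realisation of $G$, completing the argument. The delicate points to verify are that the patch realisation of $H$ is genuinely rigid, so that the flex-space splitting is available at $p_\varepsilon$, and that the number of edges incident to the joints outside $H$ equals twice the number of such joints, so that full rank of the pinned matrix is the correct target in the semicontinuity step.
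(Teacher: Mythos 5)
Your proposal is correct, and it is essentially the argument that the paper outsources: the paper's own proof of this lemma is a one-line citation to Lemma 5.2 of \cite{nix-owe-pow-1}, and your pin-and-shrink scheme (use rigidity of $H$ to split off the $2$-dimensional space of rigid motions, observe that a rigid motion of $\Y$ vanishing at one joint is zero, then place a completely regular copy of $H$ in an $\varepsilon$-patch about $p(v)$ so that the pinned rigidity matrix of $(G,p_\varepsilon)$ converges to the full-column-rank pinned matrix of $(G/H,p|_{G/H})$) is exactly the shrinking-subframework limit argument behind that cited proof, phrased via rank semicontinuity rather than via convergent sequences of normalised flexes. The two ``delicate points'' you flag are indeed fine: completely regular (hence rigid) placements of $H$ are dense in any patch of $\Y$, and the row count is irrelevant since full \emph{column} rank of the pinned quotient matrix is what the semicontinuity step needs (one should also note that the red, i.e.\ geodesic, row entries converge because generic $p(x)$ and $p(v)$ are not antipodal on $\Y$, so the geodesic distance is smooth near the limiting configuration).
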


\begin{proof}
We can directly apply the proof from \cite[Lemma 5.2]{nix-owe-pow-1}. 
\end{proof}

\subsection{Proof of Theorem \ref{thm:cylinder}}

We can now prove the main result of this section.

\begin{proof}[Proof of Theorem \ref{thm:cylinder}]
It follows from Theorem \ref{t:topcount} that any minimally rigid framework on $\Y$ is $(2,2)$-tight. For the converse we use induction on $|V|$. It is easy to see that $K_1$ and $K_4$ are minimally rigid. By Theorem \ref{thm:22restricted} it remains to show that 0-extensions, colour-restricted 1-extensions, colour-restricted vertex splits and graph extensions preserve minimal rigidity. This was the content of Lemmas \ref{lem:cr1ext}, \ref{lem:crvs} and \ref{lem:gext}.
\end{proof}

\begin{rem}{\rm 
 We remark that a second proof scheme for this theorem  could make use of a simpler combinatorial construction scheme with $1$-extension moves of unrestricted colour combination. 
 On the other hand rigidity preservation for these hybrid moves would require a further case-by-case analysis. 
}
\end{rem}

\section{Euclidean and non-Euclidean constraints and mixed $(2,2)$- and $(2,3)$-sparsity}\label{s:EuclideanAndNon}

We now consider frameworks in $\mathbb{R}^2$ where some bars constrain the Euclidean distance between their joints and the remaining bars constrain a non-Euclidean distance for the norm $\|\cdot\|_q$, for some $q\neq 2$.
Once again the structure graph is viewed as a bi-coloured multigraph, without loop edges,  where the blue edges correspond to the Euclidean distance constraints and the red edges correspond to the remaining distance constraints. 

We shall assume that $q$ is fixed, with  $1<q<\infty, q\neq 2$, and so the general results  in Section \ref{s:doubledistance} apply  to $(\bR^2, d_b, d_r)$, where
\[
d_b(p_1, p_2) = 
 \|p_1- p_2\|_2,\quad
d_r(p_1, p_2) =  \|p_1- p_2\|_{q}.
\]
In particular a completely regular double-distance framework $(G,p)$ is infinitesmally rigid if and only if the rigidity matrix has rank $2|V|-2$.

\subsection{Mixed $(2,2)$- and $(2,3)$-sparsity}
We say that a bi-coloured graph is \emph{$(2,3)$-limited} if any subgraph with only blue edges is $(2,3)$-sparse. 
It is worth commenting that this definition is not symmetric in red and blue.
We now derive a recursive construction of bi-coloured graphs that are $(2,2)$-tight and $(2,3)$-limited.
To simplify the requirements for rigidity preservation in the next section, we consider some restriction on 1-extensions. The first of these is that when we subdivide a red edge $xy$, at least one the two new edges $xv,yv$ is coloured red. In other words we do not use the 1-extension move for the colour case $r \to \{b,b\}$. 

\begin{lem}\label{lem:22lim}
A bi-coloured multigraph $G$ is $(2,2)$-tight and $(2,3)$-limited if and only if $G$ can be generated from $K_1$ by 0-extensions and 1-extensions which are not of the colour case $r \to \{b,b\}$.
\end{lem}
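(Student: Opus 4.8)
The plan is to prove both directions. The forward (necessity) direction is the routine one: I would verify that each allowed move preserves both the $(2,2)$-tight count and the $(2,3)$-limited condition. A $0$-extension adds one vertex and two edges, changing $2|V|-|E|$ by $2\cdot 1 - 2 = 0$, so $(2,2)$-tightness is preserved; and since a $0$-extension cannot create any new blue subgraph violating $(2,3)$-sparsity (the new vertex has degree $2$, so any blue subgraph through $v$ has at most two blue edges at $v$ and a short count check handles it), the $(2,3)$-limited property survives. A $1$-extension likewise preserves $2|V|-|E|$ (it changes it by $2 - (3-1)=0$) and, crucially, the exclusion of the colour case $r\to\{b,b\}$ is exactly what is needed so that no blue $(2,3)$-sparsity violation is introduced: if both new edges at $v$ were blue while the deleted edge were red, one could manufacture a blue subgraph with $|E'|=2|V'|-2$. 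I would check that in every permitted colour case the blue degree of the new vertex is at most $2$ and that any blue subgraph inherits $(2,3)$-sparsity from the smaller graph.

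For the converse (sufficiency) I would argue by induction on $|V|$, showing that every $(2,2)$-tight, $(2,3)$-limited graph $G$ other than $K_1$ admits a reverse move (a $0$-reduction or an admissible $1$-reduction) to a smaller graph in the same class. The base case $K_1$ is immediate. For the inductive step, the standard sparse-graph analysis gives that $G$ has minimum degree $2$ or $3$; a degree-$2$ vertex permits a $0$-reduction, and one must check that the resulting graph remains monochrome simple and $(2,3)$-limited, which follows from the sparsity counts. The substantive work is the degree-$3$ case.

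The main obstacle, as usual for $1$-extension characterisations, is the degree-$3$ case: given a degree-$3$ vertex $v$ with neighbours $x,y,z$, I must produce an admissible $1$-reduction, i.e. delete $v$ and add one edge among $\{x,y,z\}$ so that the result is again $(2,2)$-tight, monochrome simple, and $(2,3)$-limited, while the reverse $1$-extension avoids the forbidden $r\to\{b,b\}$ pattern. The obstruction to a given choice of re-added edge $ab$ is the existence of a $(2,3)$-blocker: a subgraph $H$ with $a,b\in H$, $v\notin H$, and $f(H)=2$ (and, for the blue edge, a subgraph with $f(H)=2$ or the monochrome-simplicity failure that $ab$ already appears in the relevant colour). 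I would show, by a counting argument on the three candidate non-edges among the neighbours together with the colour distribution of the edges $vx,vy,vz$, that at least one admissible reduction always exists. The colour bookkeeping here is delicate: since the forbidden move is $r\to\{b,b\}$, when I re-add a red edge I need at least one of the two edges it replaces at $v$ to have been red, so I must track which neighbour-pairs are joined by edges of which colours and use the $(2,3)$-limited hypothesis to rule out blockers for a blue re-added edge. I expect the argument to split into a handful of cases according to how many of $vx,vy,vz$ are blue versus red, closely paralleling the case analysis in the proof of Theorem \ref{thm:22restricted}, and the $(2,3)$-limited condition will be precisely the ingredient that eliminates the otherwise-problematic configurations.
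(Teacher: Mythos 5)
Your architecture matches the paper's exactly: necessity by checking that the permitted moves preserve both counts, and sufficiency by induction, with a $0$-reduction at a degree-$2$ vertex and an admissible $1$-reduction at a degree-$3$ vertex. The difficulty is that your write-up stops precisely where the paper's proof begins. The sentence ``I would show, by a counting argument on the three candidate non-edges together with the colour distribution of the edges $vx,vy,vz$, that at least one admissible reduction always exists'' is a restatement of what must be proved; in the paper, that case analysis \emph{is} the proof, and its steps are not routine.

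Concretely, three things are missing or wrong. First, your taxonomy of blockers is garbled: the obstruction to re-adding $(xy,r)$ is a subgraph $H$ with $x,y\in H$, $v\notin H$, $f(H)=2$, \emph{or} an existing parallel edge $(xy,r)$; the obstruction to re-adding $(xy,b)$ is such an $H$ \emph{or} a blue subgraph $F$ with $f(F)=3$ containing $x,y$ but not $v$. Your parenthetical attaches the monochrome-simplicity failure to the blue case and omits the $f(F)=3$ blue blocker entirely, and the values $2$ and $3$ are exactly what every subsequent count runs on. Second, the key submodularity step is absent: if $(2,2)$-blockers existed for two pairs, say $H_{xy}$ and $H_{yz}$, then $f(H_{xy}\cup H_{yz}\cup\{v\})\le 2+2-2+2-3=1$, contradicting $(2,2)$-sparsity; hence at most one pair of neighbours is $(2,2)$-blocked, and this is what makes the remaining bookkeeping feasible. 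Third, and most substantively, the case where all three edges at $v$ are blue: there the colour restriction forbids \emph{every} red reduction, so one must rule out the coexistence of three blue $(2,3)$-tight blockers $F_{xy},F_{xz},F_{yz}$. The paper does this by splitting on whether some pairwise intersection has at least two vertices (then $f(F_{xy}\cup F_{xz}\cup\{v\})\le 2$) or all pairwise intersections are single vertices (then $f(F_{xy}\cup F_{xz}\cup F_{yz}\cup\{v\})=2$); either way the result is a \emph{blue} subgraph with count $2$, so the contradiction is with $(2,3)$-limitedness, not $(2,2)$-sparsity. Nothing in your sketch produces this, yet it is the one place where the $(2,3)$-limited hypothesis does its work as the contradiction target. (You also need the separate, easier case of a degree-$3$ vertex with only two neighbours, where the parallel edges have distinct colours and the red one can always be designated a replacement edge, so the colour restriction never obstructs.) Until these steps are carried out, the proposal is a plan with the paper's architecture, not a proof.
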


\begin{proof}
Let $G=(V,E)$ be $(2,2)$-tight and $(2,3)$-limited.
Since $|E|=2|V|-2$, there exists a vertex $v$ of degree 2 or 3. If $v$ has degree 2 it is easy to see that $G-v$ is $(2,2)$-tight and $(2,3)$-limited. So we may suppose the minimum degree is 3. Let $v$ have degree 3 and 
consider the two graphs $G'$ resulting from a 1-reduction at $v$ adding a coloured edge on $x, y$. We see that

(i) $G'=G-v+(xy,b)$ is an allowable $1$-reduction unless $G'$ is not (2,2)-sparse or there is a blue subgraph violating (2,3)-sparsity.

(ii)  $G'=G-v+(xy,r)$ is an allowable $1$-reduction unless $G'$ is not (2,2)-sparse or $(xy,r)\in E$.


Suppose first that $v$ has exactly two neighbours $x$ and $y$. If, in either case (i) or (ii), $G'$ has a subgraph which fails $(2,2)$-sparsity  then it follows that there is a subgraph of $G$ which violates $(2,2)$-sparsity, a contradiction.
Suppose that (i) holds and there is a blue subgraph violating (2,3)-sparsity. This implies that $G$ has a subgraph $H$ containing $x,y$ but not $v$ with $f(H)=3$. Since $f(H\cup \{v\})\leq 2$ we have that $(xy,r)\notin E$ and so, by (ii) we are done. Since there is at most 1 edge between $x, y$ we have shown that an allowable $1$-reduction is always possible in the two neighbour case. 

Now suppose that $v$ has 3 neighbours $x,y,z$. First suppose at most one of the edges incident to $v$ is blue. (Since we only exclude the colour case $r \to \{b,b\}$, this is sufficient to allow us to consider all six possible coloured 1-reductions at $v$.) 

Suppose that $(2,2)$-sparsity fails for $x,y$ and for $y,z$. Then there is a subgraph $H_{xy}$ with $f(H_{xy})=2$ containing $x,y$ but not $z,v$
 and a subgraph $H_{yz}$ with $f(H_{yz})=2$ containing $y,z$ but not $x,v$. Then $f(H_{xy}\cap H_{yz})\geq 2$ so it follows that $f(H_{xy}\cup H_{yz}\cup \{v\})<2$ violating $(2,2)$-sparsity.
Hence $(2,2)$-sparsity fails for at most one pair of neighbours of $v$. Suppose that there is a blue $(2,3)$-tight subgraph $F$ containing $x,y,z$ but not $v$. Then there are no red edges induced by the vertex set of $F$. Since $(2,2)$-sparsity fails for at most for one pair  we see there is a valid 1-reduction at $v$.

Next suppose $(2,2)$-sparsity fails precisely for the pair $x,y$ but no other pair. 
Now let $F_{yz}$ be either a single red edge or a $(2,3)$-tight blue subgraph containing $y,z$ but not $x,v$. In either case $f(F_{yz})=3$ and $f(H_{xy}\cup F_{yz})\leq 3$. It follows that if either a single edge $(xz,r)$ or a $(2,3)$-tight blue subgraph containing $x,z$ but not $y,v$ exists then adding $v$ will give a subgraph that contradicts $(2,2)$-sparsity.

Lastly suppose that there is no $(2,2)$-sparsity failure for any pair of neighbours. Then if there is no 1-reduction to a $(2,2)$-tight and $(2,3)$-limited graph we have $(xy,r),(yz,r)\in E$ and we have subgraphs $F_{xy},F_{yz}$ as before. Since $f(F_{xy}+(xy,r))=2$ and $f(F_{yz}+(yz,r))=2$ we can use the argument above with $F_{xy}+(xy,r)$ and $F_{zy}+(yz,r)$ in place of $H_{xy}$ and $H_{yz}$ to reach a contradiction.

Hence we may suppose at least two of the edges incident to $v$ are blue. If exactly two then the restriction on colouring rules out only one of the six possible 1-reductions at $v$. Therefore we can use the same argument. 

Thus suppose all three edges incident to $v$ are blue. As above $(2,2)$-sparsity may fail for at most one pair. If this is the case then the argument above can be re-run. So suppose there is no $(2,2)$-sparsity failure for any pair. Then there is a 1-reduction on $v$ to a smaller $(2,2)$-tight and $(2,3)$-limited graph unless for all three pairs
there is a blue subgraph violating (2,3)-sparsity. This gives subgraphs $F_{xy},F_{xz},F_{yz}$ defined as above each with $f(\cdot)=3$. Suppose that $|F_{xy}\cap F_{xz}|\geq 2$. Then $f(F_{xy}\cap F_{xz})\geq 3$ and $f(F_{xy}\cup F_{xz}\cup \{v\})\leq 2$ contradicting the fact that $G$ is $(2,3)$-limited. Hence we may assume that $|F_{xy}\cap F_{xz}|=|F_{xy}\cap F_{yz}|=|F_{xz}\cap F_{yz}|=1$. It follows that $f(F_{xy}\cup F_{xz}\cup F_{yz}\cup \{v\})=2$, again contradicting the fact that $G$ is $(2,3)$-limited.
\end{proof}

We next modify the construction scheme above to avoid 1-extensions of colour case $b\to \{r,r\}$ when the blue edge has a parallel red edge. We say that this move has parallel edge colour case $b\to \{r,r\}$. We  show that this is possible at the expense of introducing an elementary substitution move which replaces 
the 3-vertex graph $K_2 \sqcup K_2$, obtained by joining 2 copies of bi-coloured $K_2$ at a common vertex, by a
red subgraph $K_4$, which we denote as $K_4^r$. This replacement is joined at the 3 vertex positions of the removed graph, denoted $x,y,z$ below, and provides a degree 3 vertex $v$ for the new graph (see Figure \ref{f:submove}).

\begin{center}
\begin{figure}[ht]
\centering
\begin{tikzpicture}[scale=1]

\filldraw (1,2) circle (2pt) node[anchor=east]{$x$};
\filldraw (0,0) circle (2pt) node[anchor=east]{$y$};
\filldraw (1,-2) circle (2pt) node[anchor=east]{$z$};

\draw
(1,2) -- (0,0) -- (1,-2);

\draw[dashed] plot[smooth, tension=1] coordinates{(1,2)(1,1) (0,0)};

\draw[dashed] plot[smooth, tension=1] coordinates{(1,-2)(1,-1) (0,0)};

\draw
(2,0) -- (3.5,0);

\draw
(3.3,.2) -- (3.5,0) -- (3.3,-.2);

\filldraw (6,2) circle (2pt) node[anchor=east]{$x$};
\filldraw (5,0) circle (2pt) node[anchor=east]{$y$};
\filldraw (6,-2) circle (2pt) node[anchor=east]{$z$};
\filldraw (8,0) circle (2pt) node[anchor=west]{$v$};

\draw
(6,-2) -- (6,2) -- (5,0) -- (6,-2) -- (8,0);

\draw
(6,2) -- (8,0) -- (5,0);

\end{tikzpicture}
\caption{The substitution operation replacing $K_2\sqcup K_2$ with $K_4^r$.}
\label{f:submove}
\end{figure}
\end{center}

It is straightforward to show that this (extension) move, from $G'$ to $G$, preserves the class of $(2,2)$-tight $(2,3)$-limited graphs. It is also true that the inverse preserves these properties as we now show. Any subgraph $H$, of $G'$, violating $(2,2)$-sparsity contains either $x,y$ or $y,z$ or $x,y,z$ (otherwise it was present as a subgraph of $G$). In the latter case we get a violation in $G$ by adding $(xz,r)$ and $v$'s 3 incident edges. If $H$ contains $x,y$ but not $z$ then, in $G$, adding $z$ and $v$ and their 5 incident edges gives a violation in $G$.

Similarly, any blue subgraph $H$ violating $(2,3)$-sparsity contains either $x,y$ or $y,z$ or $x,y,z$. First the latter case, to get a violation in $G$ we subtract $(xy,b), (yz,b)$ from $H$ but add $(xy,r), (yz,r), (xz,r)$ and $v$'s 3 incident edges, the new graph has both colours but violates $(2,2)$-sparsity. Lastly if $H$ contains $x,y$ but not $z$ then, in $G$, $H+(xy,r)$ plus $z,v$ and their 5 incident edges gives a violation of $(2,2)$-sparsity.

\begin{lem}\label{l:construction2}
A bi-coloured multigraph $G$ is $(2,2)$-tight and $(2,3)$-limited if and only if $G$ can be generated from $K_1$ by $K_2\sqcup K_2$ substitutions, 0-extensions and 1-extensions which are not of the colour case $r \to \{b,b\}$ or the parallel edge colour case $b\to \{r,r\}$.
\end{lem}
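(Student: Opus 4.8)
The plan is to deduce the result from Lemma~\ref{lem:22lim} by showing that each forced parallel-edge $b\to\{r,r\}$ move can be replaced either by an inverse $K_2\sqcup K_2$ substitution or by a different permitted reduction. Sufficiency is immediate: every $0$- or $1$-extension allowed here is among those of Lemma~\ref{lem:22lim}, and the $K_2\sqcup K_2$ substitution was shown above to preserve $(2,2)$-tightness and $(2,3)$-limitedness; as $K_1$ lies in the class, induction gives one direction.

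For necessity I would induct on $|V|$, running the reduction scheme of Lemma~\ref{lem:22lim}. A $0$-reduction, or a $1$-reduction of colour case other than $r\to\{b,b\}$ or parallel-edge $b\to\{r,r\}$, is permitted, so I recurse in those cases. Hence it suffices to treat a degree-$3$ vertex $v$ at which the only reduction offered is parallel-edge $b\to\{r,r\}$; this means $v$ carries two red edges $(xv,r),(yv,r)$ with $(xy,r)\in E$. If $v$ has a repeated neighbour the superfluous parallel edge may be taken as the extra edge of a permitted $b\to\{b,r\}$ extension, so I assume $v$ has three distinct neighbours $x,y,z$ and split on the colour of $zv$, using throughout the submodularity count from the proof of Lemma~\ref{lem:22lim}: at most one neighbour-pair of $v$ is contained in a $(2,2)$-tight subgraph avoiding $v$.

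Suppose $zv$ is blue. As at most one pair is blocked, one of $\{x,z\},\{y,z\}$ is unblocked, say $\{x,z\}$. If $(xz,r)\notin E$, merging $x,z$ by a red edge is a permitted $r\to\{r,b\}$ reduction; otherwise $(xz,b)\notin E$ (else $\{x,z\}$ would carry a double edge and be blocked) and I merge by a blue edge, a permitted $b\to\{r,b\}$ reduction. The blue-$(2,3)$ condition cannot obstruct this: a blue $(2,3)$-tight blocker $F$ with $x,z\in V(F)$ would contain $(xz,r)$ and, by $(2,2)$-sparsity, no further red edge, giving $|E(F)|=2|V(F)|-2$; then $F$ is $(2,2)$-tight and avoids $v$, contradicting that $\{x,z\}$ is unblocked. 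Thus the blue case always yields a permitted reduction.

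Now suppose $zv$ is red, so all three edges at $v$ are red. At an unblocked pair whose red edge is absent I may perform a permitted $r\to\{r,r\}$ reduction; otherwise every unblocked pair carries its red edge, and since at most one pair is blocked at least two of $(xy,r),(xz,r),(yz,r)$ lie in $E$. If all three do, $\{x,y,z,v\}$ spans a red $K_4$ with $v$ of degree $3$ and I apply the inverse $K_2\sqcup K_2$ substitution. If one, say $(yz,r)$, were absent then $\{y,z\}$ is the blocked pair, witnessed by a $(2,2)$-tight $H$ with $y,z\in V(H)$ and $v\notin V(H)$; with $S$ the induced subgraph on $\{x,y,z,v\}$, which carries its five red edges (so $f(S)\le3$), a direct count gives $f(S)+f(H)-f(S\cap H)\le1$, whence $f(S\cup H)\le1$, contradicting $(2,2)$-sparsity. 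So the sixth red edge is present and the substitution again applies, completing the induction. The main obstacle is exactly this last step: showing that being forced into a parallel-edge $b\to\{r,r\}$ move guarantees a complete $K_4^r$, not merely a red $K_4$ with one edge missing; it is the submodular count on $S$ and $H$ that secures the presence of the final red edge.
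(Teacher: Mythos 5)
Your proposal is correct and follows essentially the same route as the paper: both run the reduction scheme of Lemma~\ref{lem:22lim} and show that a degree-3 vertex admitting only a parallel-edge $b\to\{r,r\}$ reduction must lie in a red $K_4$, so that the inverse $K_2\sqcup K_2$ substitution applies. The differences are only organizational (you split on the colour of $zv$ first, whereas the paper first forces the red triangle edges $(xz,r),(yz,r)$ and then rules out a blue $zv$ using blue $(2,3)$-tight blockers), with the same blocker and submodular counting machinery doing the work in both.
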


\begin{proof} 
Let $G$ be $(2,2)$-tight and $(2,3)$-limited.
Let $v$ be a degree 3 vertex in $G$. 
By the discussion above, it suffices to show that if there is no allowable $1$-reduction for $v$ then $v$ lies in a $K_4^r$ subgraph.

Let $x,y,z$ be the neighbours of $v$.
We may suppose that $(xy,r)$ exists and the edges incident to $v$ from $x$ and $y$ are red. Since there is no 1-reduction to a red edge $xz$  it follows that $(xz,r)$ exists or there is a (2,2)-tight subgraph $H_{xz}$ as before (ie. containing $x,v$ but not $y,z$). Similarly, since there is no 1-reduction to a red edge $yz$ then either
$(yz,r)$ exists or there is a (2,2)-tight subgraph $H_{yz}$.
Since the subgraph $(H_{xz}\cup H_{yz})+ xy$ violates $(2,2)$-sparsity we may assume, relabelling if necessary, that $(xz,r)$ exists.

Now note that we cannot have a (2,2)-tight $H_{yz}$, since adding
the 3 edges to $v$ and the 2 red edges violate $(2,2)$-sparsity. So we can assume that we have the red edge $(yz,r)$, and hence a $K_4$ in $G'$ all of whose edges, except perhaps $vz$ are red. 

Finally, we show that $vz$ is not a blue edge. Note that the $K_4$ subgraph ensures that
$(xz, b)$ and $(yz,b)$ are not in $G$.
Since there is no allowable reduction to $(xz, b)$ or to $(yz,b)$  there are $(2,3)$-tight blue graphs $F_{xz}$ and $F_{yz}$ as before. The union of these graphs with the $K_4$ graph violates $(2,2)$-sparsity and so the proof is complete.
\end{proof}

\subsection{$1$-extensions and rigidity preservation}
\label{subsec:1ext}

We next show that various  mixed colour  1-extension moves preserve infinitesimal rigidity for completely regular frameworks in the double-distance context $(\bR^2, \|\cdot \|_2, \|\cdot \|_q)$. 
Recall that for a $1$-extension move
\[
G \to G' = G- v_1v_2 + (v_0v_1 +v_0v_2+v_0v_3)
\]
the edge $v_1v_2$ is replaced by the replacement edges $v_0v_1, v_0v_2$
and a connecting edge $v_0v_3$.  There are a number of colour cases
$c_1 \to \{c_2, c_3\}$ which we shall consider where, as before, 
$c_1$ is the colour of $v_1v_2$ and $c_2, c_3$ are the colours of the replacement edges.

\begin{lem}\label{lem:coloured1extension} 
Let $G$ and $G'$ be $(2,2)$-tight  bi-coloured multigraphs and
let $G \to G'$ be a 1-extension move, with colour case
$r\to\{b,r\}, b\to\{b,r\}, b\to\{b,b\}$ or $r\to\{r,r\}$.   
If $(G,p)$ and $(G',p')$ are completely regular and $(G,p)$ is minimally rigid then $(G',p')$ is infinitesimally rigid.
\end{lem}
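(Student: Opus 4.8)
The plan is to adapt the standard planar special-position argument recalled before Definition~\ref{d:special3joints} to each of the four colour cases, using the equivalence (Lemma~\ref{l:Df_lemma}) between infinitesimal flexes and kernel vectors of the rigidity matrix. In each case the move deletes a bar $p(v_1v_2)$ of colour $c_1$ and adds bars $p'(v_0v_1), p'(v_0v_2)$ of colours $c_2,c_3$ together with a connecting bar $p'(v_0v_3)$. Following Definition~\ref{d:special3joints}, the key is to exhibit a special position for the new joint $p_0'$ relative to $p_1,p_2$ so that every infinitesimal flex of the two-bar subframework $\{p_0'p_1; c_2\}, \{p_0'p_2; c_3\}$ restricts to an infinitesimal flex of the deleted bar $\{p_1p_2; c_1\}$. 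Granting such a position, the argument is exactly as in the planar Euclidean case: a nonrigid flex $u'$ of $(G',p')$ restricts to a flex $u$ of $(G,p)$; by minimal rigidity $u$ is a rigid motion flex, and subtracting the corresponding rigid motion flex of $(G',p')$ yields a nonrigid flex supported only on $p_0'$, which is impossible once $p_1,p_2,p_3$ are in suitably general (non-degenerate) position.

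First I would treat the monochrome cases $b\to\{b,b\}$ and $r\to\{r,r\}$. The case $b\to\{b,b\}$ is the classical Euclidean $1$-extension, and the special position is simply $p_0'$ colinear with $p_1,p_2$, since a velocity pair is a Euclidean flex of $\{p_1p_2\}$ exactly when its components along the common bar direction agree. The case $r\to\{r,r\}$ is the analogous statement for the $\|\cdot\|_q$-norm: here the infinitesimal flex condition for a red bar $\{p_1,p_2;d_r\}$ states that the velocity difference $u_1-u_2$ is annihilated by the gradient $\nabla d_r(p_1-p_2)$, so I would choose $p_0'$ so that the two red-bar gradient directions at $p_0'p_1$ and $p_0'p_2$ both coincide with the red-bar gradient direction at $p_1p_2$; for the strictly convex smooth norm $\|\cdot\|_q$ with $1<q<\infty$, $q\neq 2$, the gradient direction is a diffeomorphism of directions, so such a colinear-type special position exists and gives the required flex-transfer.

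The genuinely new work is in the mixed cases $r\to\{b,r\}$ and $b\to\{b,r\}$, where the two replacement bars carry different colours and hence impose constraints governed by different gradient covectors $\nabla d_b$ and $\nabla d_r$. Here I cannot simply demand colinearity; instead I want to position $p_0'$ so that the (one-dimensional) common kernel of the two-bar system at $p_0'$ matches the kernel of the single deleted bar at $p_1p_2$. Concretely, the two-bar flex space at $p_0'$ is the set of velocity pairs $(u_0,u_j)$ with $\langle \nabla d_b(p_0'-p_1), u_0-u_1\rangle=0$ and $\langle \nabla d_r(p_0'-p_2), u_0-u_2\rangle=0$; eliminating $u_0$ gives a single linear condition on $(u_1,u_2)$, and I must choose $p_0'$ so that this condition is exactly the deleted-bar condition for colour $c_1$ (Euclidean if $c_1=b$, $\|\cdot\|_q$ if $c_1=r$). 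I expect this to reduce, after the elimination, to matching one covector direction, which can be achieved by a one-parameter placement of $p_0'$ along an appropriate curve through the segment; complete regularity of $(G,p)$ guarantees the remaining framework is at maximal rank so the limiting/special-position flex genuinely survives. This covector-matching step, ensuring a valid special position exists for the mixed-colour geometry of $(\bR^2,\|\cdot\|_2,\|\cdot\|_q)$, is the main obstacle, and it is presumably where the more involved special position argument foreshadowed in the remark after Definition~\ref{d:special3joints} (and handled separately in Lemma~\ref{l:coloured1extensionSpecialCase}) becomes necessary; the remaining degenerate subcase is deferred to that lemma.
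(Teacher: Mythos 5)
Your treatment of the two monochrome cases agrees with the paper and is fine: $b\to\{b,b\}$ is the classical colinear specialisation, and for $r\to\{r,r\}$ the gradient-direction map of the strictly convex smooth $q$-norm is injective modulo sign, so colinearity again makes all three covectors parallel. The gap is in the mixed cases $r\to\{b,r\}$ and $b\to\{b,r\}$, which are the whole substance of this lemma. Note first that your elimination step only produces a condition on $(u_1,u_2)$ when the two covectors at $p_0'$ are parallel: if $\nabla d_{c_2}(p_0'-p_1)$ and $\nabla d_{c_3}(p_0'-p_2)$ are linearly independent, then for \emph{every} pair $(u_1,u_2)$ the two equations have a solution $u_0$, and nothing is transferred. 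So your special position must make the two covectors at $p_0'$ parallel to each other \emph{and} parallel to the deleted-bar covector. For mixed colours this is generically impossible at any finite $p_0'$. In case $b\to\{b,r\}$, matching the blue covector $p_0'-p_1$ with the deleted-bar covector $p_1-p_2$ forces $p_0'$ onto the line through $p_1,p_2$; then the red covector $\nabla\|p_0'-p_2\|_q$ is parallel to $\nabla\|p_1-p_2\|_q$, and demanding that this be parallel to $p_1-p_2$ forces $p_1-p_2$ into one of the finitely many exceptional directions where the $q$-norm gradient is parallel to its argument --- excluded for completely regular placements. The same computation rules out case $r\to\{b,r\}$. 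So no ``one-parameter placement of $p_0'$ along a curve through the segment'' exists; this is precisely why $q\neq 2$ makes these cases hard.

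The paper's proof replaces the finite special position by a limiting (degenerate) one, in the style of \cite[Lemma 4.2]{nix-owe-pow-2}: one takes specialisations $p_0^k\to p_2$, exploiting the fact that the covector of a red bar $\{p_0^k,p_2;r\}$ depends only on the \emph{direction} of $p_0^k-p_2$, not its length. The approach direction is chosen so that the red covector is parallel to the required target covector for all $k$, while the covector of the bar to $p_1$ converges automatically to the bar direction $p_2-p_1$; thus the covector matching that fails at every finite position is achieved in the limit. One then takes unit-norm flexes $u^k$ of $(G',p^k)$ with $u_1^k=0$ (available because flexibility of one completely regular placement implies flexibility of all placements), passes to a convergent subsequence, shows the limit velocity $u_2$ satisfies the deleted-bar constraint so that the restriction is a flex of $(G,p)$, and finally rules out triviality of that restriction using the two bars from $p_1$ and $p_3$ into the collapsed joint. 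None of this limiting machinery appears in your proposal. Also, you cannot defer the mixed cases to Lemma~\ref{l:coloured1extensionSpecialCase}: that lemma treats the colour case $b\to\{r,r\}$, which is excluded from the present statement, so the cases $r\to\{b,r\}$ and $b\to\{b,r\}$ must be proved in full here.
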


\begin{proof}
We may assume that $p = (p_1,\dots , p_n)$ and $p' = (p_0, p)$. We show that if $(G', p')$ is not infinitesimally rigid then the same is true for $(G, p)$. We use a limiting argument to construct an infinitesimal flex of $(G,p)$, which is in the style of the proof of  \cite[Lemma 4.2]{nix-owe-pow-2}. 

(i) Consider first the colour case $r \to \{r, b\}$ together with the assumption that $v_0v_3$ is blue.

Let $b$ be a unit vector which, at $p_2$, is tangent to the closed curve of points $(x,y)$ whose non-Euclidean distance from $p_1$ is $d_r(p_1, p_2)$. Equivalently $b$ is a vector such that the real scalar multiples $\lambda (0, b)$  provide all the real infinitesimal flexes of the form  $(0, u_2)$ for the bar $\{p_1, p_2; r\}$ in $(G,p)$ with zero velocity at $p_1$.
Let $p_0^k$ be a sequence of specialisations of $p_0$, with $p_0^k- p_2$  orthogonal to $b$, which converge to $p_2$ as $k$ tends to infinity. See Figure \ref{f:1extn_mixed}.

\begin{center}
\begin{figure}[ht]
\centering
\begin{tikzpicture}[scale=1]

\filldraw (0,0) circle (2pt) node[anchor=north]{$p_0^k$};
\filldraw (-.2,2) circle (2pt) node[anchor=west]{$p_1$};
\filldraw (-2,.4) circle (2pt) node[anchor=east]{$p_3$};
\filldraw (1,-1) circle (2pt) node[anchor=north]{$p_2$};

\draw
(-2,.4) -- (0,0) -- (1,-1) -- (1.5,-.5);

\draw
(1.4,-.5) -- (1.5,-.5) -- (1.5, -.6);

\draw
(.5,.5) -- (0,0);

\draw
(.4,.5) -- (.5,.5) -- (.5, .4);

\draw[dashed]
(0,0) -- (-.2,2);

\draw
(1.2,-.8) -- (1,-.6) -- (.8,-.8);

\end{tikzpicture}
\caption{The colour case $r \to \{b, r\}$}
\label{f:1extn_mixed}
\end{figure}
\end{center}

It follows from the assumption that there is an infinitesimal flex 
\[
u^k = (u^k_0, u^k_1, u^k_2, \dots , u^k_n)
\] 
of
$(G', p^k)$, where $p^k= (p_0^k, p)$, such that $u^k_1=0$ and  
$\|u^k\|_2=1$. Moreover, by replacing the sequence $(u^k)$ by a subsequence we may assume that this sequence has a limit
\[
u = (u_0, u_1, u_2, \dots , u_n).
\] 
This vector, which has $u_1=0$ and  unit norm, is necessarily an infinitesimal flex of the degenerate framework $(G', p^\infty)$ where $p^\infty$ is the limit of the sequence $(p^k)$. (This framework has a zero length bar whose endpoints are the coinciding placements of $v_0$ and $v_2$.)

Note that $u^k_0$ is a scalar multiple of the tangent vector $b_k$ at $p^k_0$ for the boundary curve of the non-Euclidean  ball centred at $p_1$ with radius $d_r(p_1,p_0^k)$. Taking limits we see that $u_0$ is parallel to $b$. On the other hand, considering the Euclidean bar between $p_0^k$ and $p_2$, we have $(u_0^k-u^k_2)\cdot a =0$ for all $k$, where $a$ is a nonzero vector orthogonal to $b$. 
It follows that $u_2\cdot a =0$, that is, that
 $u_2$ is a scalar multiple of $b$.

Since the vector $(0,u_2)$ can now be regarded as a flex of the non-Euclidean bar $\{p_1,p_2;r\}$ it follows that the restriction of $u$ to the joints of $(G,p)$ provides an infinitesimal flex, $u_r $ say, of $(G,p)$ with zero velocity for the joint $p_1$. Thus either $(G,p)$ is infinitesimally flexible, as desired, or $u_r=0$. However, if $u_r=0$ then, since $u$ has norm $1$ the velocity $u_0$ has norm 1 and we obtain a contradiction by considering the regular subframework of $(G', p^\infty)$ consisting of the two bars from $p_1$ and $p_3$ to $p_0^\infty$. Note that for this last contradiction the colour of the edges is irrelevant, so the argument also covers the case that the edge $v_0v_3$ is red.

\medskip

(ii) The case $b \to \{b, r\}$. This case follows by a similar limiting argument associated with the specialisations  illustrated in Figure \ref{f:1extn_mixed2}. Let $m$ be a unit vector perpendicular to $p_1 - p_2$ and choose specialisations $p_0^k$ so that $m$ is the flex direction for $p_2$ for a flex of the bar $\{p_0^k,p_2; r\}$ which has zero velocity at $p_0^k$.

\begin{center}
\begin{figure}[ht]
\centering
\begin{tikzpicture}[scale=1]

\filldraw (0,0) circle (2pt) node[anchor=west]{$p_0^k$};
\filldraw (-.8,2) circle (2pt) node[anchor=west]{$p_1$};
\filldraw (-2,.3) circle (2pt) node[anchor=east]{$p_3$};
\filldraw (-.5,-1) circle (2pt) node[anchor=north]{$p_2$};

\filldraw (0,-.8) circle (0pt) node[anchor=north]{$m$};

\draw
(-.8,2) -- (0,0) -- (-2,.3);

\draw[dashed]
(0,0) -- (-.5,-1);

\draw[dotted]
(-.5,-1) -- (-.8,2);

\draw
(0,-.8) -- (-.5,-1);

\draw
(-.2,-.8) -- (0,-.8) -- (-.1,-1);

\end{tikzpicture}
\caption{The colour case $b \to \{b, r\}$.}
\label{f:1extn_mixed2}
\end{figure}
\end{center} 

Consider a converging sequence of flexes $u^k$ of $(G',p^k)$, as before, with $u^k_1=0$ and unit norm. Then  the velocity $u_2^k$ is the sum 
$u_0^k+\lambda_k m$,
with $u_0^k$ viewed as a translation component for the bar $\{p_0^k,p_2; r\}$, and  $\lambda_km$ viewed as a `non-Euclidean rotation' component of the same bar with zero velocity at $ p_0^k$. As $k$ tends to infinity  $u_0^k$ tends to a scalar multiple of $m$ and so $u_2$ is also a scalar multiple of $m$. Thus the vector $(0,u_2)$ can be viewed as an infinitesimal flex of the bar $\{p_1, p_2;b\}$ and we can argue exactly as before. 


(iii) The case $b \to \{b,b\}$. This  follows from the standard linear specialisation, irrespective of the colour of $v_0v_3$.

(iv) The case $r \to \{r,r\}$. This follows as in (iii) from a linear specialisation. (See also Kitson and Power \cite{kit-pow-1} for the purely non-Euclidean case.)
\end{proof}

 

The proof of the next lemma is analogous to the continuous flex special position argument in \cite{nix-owe-pow-1} for 1-extensions for frameworks on the cylinder.

\begin{lem}\label{l:coloured1extensionSpecialCase} 
Let $n\geq 2$ and suppose that all
$(2,2)$-tight, $(2,3)$-limited,  bi-coloured multigraphs with $n$ vertices have completely regular placements which are minimally infinitesimally rigid. Let $G$ be such a graph
and let $G \to G'$ be a 1-extension move on the edge $e=(v_1v_2,b) $ for the colour case $b\to\{r,r\}$, where $G$ has no edge $(v_1v_2, r)$. Then the completely regular placements of $G'$ are minimally infinitesimally rigid.
\end{lem}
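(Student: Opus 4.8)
The plan is to argue by contradiction using a limiting special-position argument in the spirit of Lemma \ref{lem:coloured1extension}, but one that must work harder because both replacement bars are non-Euclidean while the deleted bar is Euclidean. Write $p'=(p_0,p)$ with $p=(p_1,\dots,p_n)$, and observe that $G'-v_0 = G-(v_1v_2,b)$, so deleting $v_0$ from any flex of $G'$ yields a velocity field on the vertices of $G$ that respects every bar of $G$ except possibly the deleted Euclidean bar $v_1v_2$. Since $G$ is a $(2,2)$-tight, $(2,3)$-limited graph on $n$ vertices, the inductive hypothesis gives that its completely regular placements are minimally infinitesimally rigid, so $G-(v_1v_2,b)$ has a flex space that is exactly three-dimensional, spanned by the translations together with one nontrivial flex $w$ which does not preserve $v_1v_2$, i.e.\ $(w_1-w_2)\cdot(p_1-p_2)\neq 0$. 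As $G'$ is $(2,2)$-tight it is minimally infinitesimally rigid iff its rigidity matrix has rank $2|V'|-2$, and by complete regularity (maximality of the rank) it suffices to contradict the assumption that \emph{every} realisation of $G'$ is infinitesimally flexible; the hypothesis that $G$ carries no red edge $v_1v_2$ ensures $G'$ stays monochrome-simple and $(2,2)$-tight, so this rank criterion applies throughout.

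First I would set up the degeneration. Fix a completely regular placement $p$ of $G$, choose an approach direction $e$ with $\nabla\|\cdot\|_q(e)$ parallel to $p_1-p_2$, and take specialisations $p_0^k\to p_2$ along $e$, so that the red bar $v_0v_2$ shrinks to zero length. Using the standing flexibility assumption, pick for each $k$ a nontrivial infinitesimal flex $u^k$ of $(G',(p_0^k,p))$, normalised so that $u_1^k=0$ (subtracting a translation) and $\|u^k\|_2=1$, and pass to a convergent subsequence with limit $u$. Then $u|_G=\alpha w+\tau$ lies in the flex space of $G-(v_1v_2,b)$; the normalisation $u_1=0$ forces $\tau=-\alpha w_1$, so $u_2=\alpha(w_2-w_1)$ and $u_3=\alpha(w_3-w_1)$. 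The red bar $v_0v_1$ forces $u_0$ to be parallel to the $\|\cdot\|_q$-ball tangent $b$ at $p_2$ centred at $p_1$; the degenerate red bar $v_0v_2$ yields $(u_0-u_2)\cdot\nabla\|\cdot\|_q(e)=0$; and the connecting bar $v_0v_3$ yields $(u_0-u_3)\cdot g_{03}=0$, where $g_{03}$ is the corresponding limiting gradient (or $p_2-p_3$ if $v_0v_3$ is Euclidean). Writing $u_0=\beta b$, the last two relations become a homogeneous linear system in the scalars $(\alpha,\beta)$.

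The main obstacle, and the reason this colour case is genuinely harder than those of Lemma \ref{lem:coloured1extension}, is that one cannot simply read off $u_2\perp(p_1-p_2)$: because the non-Euclidean tangent $b$ is \emph{not} perpendicular to $p_1-p_2$, the residual term $u_0\cdot\nabla\|\cdot\|_q(e)$ need not vanish, so the two red replacement bars do not by themselves reproduce the Euclidean flex condition on $v_1v_2$. Since $u$ is a nontrivial limit with $\|u\|_2=1$, flexibility forces $(\alpha,\beta)\neq 0$ and hence the $2\times2$ coefficient determinant of the homogeneous system to vanish; the argument is therefore completed by \emph{computing} that determinant and showing it is nonzero for a suitably chosen completely regular configuration, which contradicts the flexibility hypothesis and proves $(G',p')$ infinitesimally rigid. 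Establishing that this determinant does not vanish identically — equivalently, that the deleted Euclidean constraint is not already implied by the two red replacement constraints together with the connecting bar — is the crux. I would verify it by a direct special-position computation, using $(w_1-w_2)\cdot(p_1-p_2)\neq 0$, the choice of $e$ (so that $(w_2-w_1)\cdot\nabla\|\cdot\|_q(e)\neq 0$ and $b\cdot\nabla\|\cdot\|_q(e)\neq0$), and the freedom in the position of $p_3$ and the colour of $v_0v_3$, to make the two rows of the determinant manifestly independent; the density of completely regular placements then transfers the conclusion to every completely regular $(G',p')$. This may equivalently be phrased through continuous flexes via Theorem \ref{t:rigidityequivalence}, matching the continuous flex special-position scheme used on the cylinder.
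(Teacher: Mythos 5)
Your setup is legitimate and close in spirit to the paper's other limiting arguments (Lemma \ref{lem:coloured1extension}): the specialisations $p_0^k\to p_2$, the normalised flexes $u^k$ with $u_1^k=0$, and the reduction of the limit flex to a homogeneous $2\times 2$ system in $(\alpha,\beta)$ are all sound. But the proof has a genuine gap at exactly the point you call the crux: you never show the determinant is nonzero, and the freedoms you propose to use for this do not exist. The colour of $v_0v_3$ is prescribed by the given 1-extension move, not chosen by you; and $p_3$ is a joint of $(G,p)$, so it cannot be repositioned independently --- perturbing it changes the whole placement $p$ and hence the flex $w$, so you have no control over the coupling term $(w_3-w_1)\cdot g_{03}$ that enters the second row of your determinant. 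Since the vanishing of that determinant is precisely the statement that your degenerate limit framework has a nontrivial flex fixing $p_1$, proving it nonzero \emph{is} the lemma in your formulation; deferring it to an unspecified ``direct special-position computation'' leaves the argument circular. Note also that your approach never uses the hypothesis that $G$ has no edge $(v_1v_2,r)$ except for bookkeeping, which is a warning sign, because that hypothesis is what powers the paper's proof.

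The paper closes the argument with a device absent from your proposal. It recolours the deleted edge to form $G^r=G-(v_1v_2,b)+(v_1v_2,r)$, which is again $(2,2)$-tight, $(2,3)$-limited and monochrome simple \emph{because} $G$ has no red edge $v_1v_2$, and applies the inductive hypothesis to both $G$ and $G^r$, choosing $p$ so that both frameworks are infinitesimally rigid. Rigidity of $(G,p)$ gives that the direction $a$ of $u_2$, for the one-dimensional space of flexes of $G\setminus e$ fixing $p_1$, is not orthogonal to $p_1-p_2$; rigidity of $(G^r,p)$ gives the companion fact that $a$ is not parallel to the tangent $b$ of the $q$-norm circle centred at $p_1$ through $p_2$. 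The new joint is then placed on the open segment $[p_1,p_2]$ rather than approaching along your direction $e$: since the $q$-norm gradient depends only on direction, the two red bars at a collinear joint have parallel constraint normals, so together they force $u_2\in\operatorname{span}(b)$, while membership in the flex space of $G\setminus e$ forces $u_2\in\operatorname{span}(a)$; hence $u_2=0$, then $u$ vanishes on all of $G$, and the third bar only has to annihilate the single remaining velocity $u_{n+1}$, which non-collinearity of $p_1,p_2,p_3$ (red case) or a further choice of $p_{n+1}$ on the segment (blue case) achieves. In short, the collinear special position decouples your $2\times 2$ system, and the rigidity of $G^r$ supplies the fact ($a\not\parallel b$) that replaces your unproven determinant claim; had you specialised $p_0^k$ towards $p_2$ along the segment direction instead of along $e$ with $\nabla\|\cdot\|_q(e)\parallel p_1-p_2$, your limit argument would reproduce this situation, but you would still need $G^r$ and the inductive hypothesis applied to it to finish.
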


\begin{proof}
Let $G^r$ be $G$ with $(v_1v_2,b)$ replaced by $(v_1v_2,r)$. Note that 
$G^r$ is also $(2,2)$-tight and $(2,3)$-limited.
Let $p$ be a placement of the vertices of $G$ such that both
$(G,p)$ and $(G^r,p)$ are completely regular and infinitesimally rigid.
Consider the framework $(G\backslash e, p)$ obtained by removing the  Euclidean bar $\{p_1, p_2;b\}$.  When $p_1$ is fixed the framework $(G\backslash e, p)$ has one degree of flexibility, roughly speaking. 
To be precise, 
the space of infinitesimal flexes $u$ of this framework which have local velocity $u_1=0$ at $p_1$ is $1$-dimensional. Also $u$ is nonzero if and only if the local velocity $u_2$ is nonzero and in this case $u_2$ is a nonzero multiple of a unit vector, $a$ say, at $p_2$. In fact $a$  is not orthogonal to $p_1-p_2$ since $(G,p)$ is infinitesimally rigid. We shall make use of the companion fact that since $(G^r,p)$ is infinitesimally rigid the vector $a$ is not parallel to a vector, $b$ say, at $p_2$ which is tangent to the curve of points $z=(x,y)$ with $d_r(z,p_1) = d_r(p_2,p_1)$. 

Choose a joint $p_{n+1}$ on the interior of the line segment $[p_1, p_2]$ and add bars $\{p_1,p_{n+1};r\}$ and $ \{p_{n+1}, p_2;r\}$, as indicated in Figure \ref{f:continuous}, to create a framework $((G\backslash e)^+, p^+)$.

\begin{center}
\begin{figure}[ht]
\centering
\begin{tikzpicture}[scale=1]


\filldraw (-.4,.3) circle (2pt) node[anchor=west]{};
\filldraw (0,2) circle (2pt) node[anchor=south]{$p_1$};
\filldraw (-1.6,-.3) circle (2pt) node[anchor=east]{$p_3$};
\filldraw (-.7,-1) circle (2pt) node[anchor=north]{$p_2$};

\draw
(-.5,2.4) -- (0,2) -- (.4,2.4);

\draw
(-1.2,-1.4) -- (-.7,-1) -- (-.3,-1.4);

\draw[dashed]
(0,2) -- (-.7,-1);

\draw[dashed]
(-1,.4) -- (0.2,.2);

\draw
(-.9,.25) -- (-1,.4) -- (-.85,.5);

\draw
(.05,.1) -- (.2,.2) -- (.1,.35);

\end{tikzpicture}
\caption{The construction of $((G\backslash e)^+, p^+)$ from $(G,p)$.}
\label{f:continuous}
\end{figure}
\end{center}

Let $u$ be a nonzero infinitesimal flex of $((G\backslash e)^+, p^+)$ with $u_1=0$. By the choice of $p_{n+1}$, since $u_1=0$ the local velocity of $u_{2}$ is in the direction $a$, as well as direction $b$, and so is equal to $0$. Since $(G^r,p)$ is infinitesimally rigid the local velocities
$u_3, \dots , u_n$ must also be zero. Thus $u_{n+1}\neq 0$. 
Finally construct the framework $(G',p')$ by adding the bar $\{p_3,p_{n+1};c\}$ to $((G\backslash e)^+, p^+)$.
The completely regular placements $p$ above (for $G$ and $G^r$) are dense so we can arrange that  $p_1, p_2, p_3$ are not colinear. It follows  that if $c$ is red then an infinitesimal flex $u$ of $(G,p)$ fixing $p_1$, as above, does not extend to an infinitesimal flex of $(G',p')$. Thus $(G', p')$ is infinitesimally rigid, and so any completely regular framework for $G'$ is infinitesimally rigid.
Similarly, if $c$ is blue we may choose $p_{n+1}$ on the line segment  such that $p_3-p_{n+1}$ is not orthogonal to the vector $b$ to arrive at the same conclusion.
\end{proof}

\subsection{Minimally rigid frameworks for $(\bR^2, \|\cdot\|_2, \|\cdot\|_q)$}

\begin{thm}\label{t:euclidean_noneuclidean}
Let $G$ be a bi-coloured multigraph.
A completely regular framework $(G,p)$ in $(\bR^2,(\|\cdot\|_2,\|\cdot\|_q))$, where $1<q<\infty, q\neq 2$, is minimally infinitesimally rigid if and only if $G$ is $(2,2)$-tight and $(2,3)$-limited.
\end{thm}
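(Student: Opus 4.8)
The plan is to prove this by the standard two-directional argument, splitting into the necessity of the combinatorial conditions and the sufficiency via an inductive construction.

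For the necessity direction, suppose $(G,p)$ is minimally infinitesimally rigid. Since the rigidity map here has the block form $Df_G(p)$ with $\dim X_0 = 2$ and a $2$-dimensional space $\F((K(V),p))$ of rigid motion flexes (the infinitesimal translations and the single infinitesimal rotation of $\bR^2$), Theorem \ref{t:topcount} immediately forces $|E| = 2|V| - 2$. The $(2,2)$-sparsity of every subgraph then follows from the fact that minimal infinitesimal rigidity requires linear independence of the rows of $Df_G(p)$, so no subframework may be overbraced; equivalently any subgraph $H$ with $|E(H)| > 2|V(H)| - 2$ would carry a dependent row. The $(2,3)$-limited condition requires a genuinely geometric input: I would argue that for a completely regular framework, any purely blue (Euclidean) subgraph behaves exactly as an ordinary Euclidean framework in $\bR^2$, whose rows are dependent unless the subgraph is $(2,3)$-sparse by the Pollaczek--Geiringer/Laman theorem \cite{laman}. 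Hence a blue subgraph violating $(2,3)$-sparsity would again produce a row dependency, contradicting minimality. I would want to phrase this carefully using complete regularity so that ranks are attained.

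\begin{proof}[Proof of Theorem \ref{t:euclidean_noneuclidean}]
Suppose first that $(G,p)$ is minimally infinitesimally rigid. Since $X_0 = \bR^2$ has $\dim X_0 = 2$ and the rigid motion infinitesimal flex space $\F((K(V),p))$ is $2$-dimensional (spanned by the two translations, with the Euclidean rotations and non-Euclidean ``rotations'' acting incompatibly on the two colours so that only translations survive for $K(V)$), Theorem \ref{t:topcount} gives $|E| = 2|V| - 2$. By Lemma \ref{l:Df_lemma} and Definition \ref{d:infinitesimalflex}, minimality means the rows of $Df_G(p)$ are independent, so no subframework may carry a row dependency; a subgraph with $|E'| > 2|V'| - 2$ would restrict to an overdetermined subframework and hence be dependent, giving $(2,2)$-sparsity. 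For the $(2,3)$-limited condition, observe that a purely blue subgraph restricts to an ordinary Euclidean framework in $\bR^2$, whose rows $R_1$ are independent, by complete regularity and the theorem of Pollaczek--Geiringer and Laman \cite{laman}, only when the subgraph is $(2,3)$-sparse. Thus every blue subgraph is $(2,3)$-sparse, and together with $(2,2)$-tightness this shows $G$ is $(2,2)$-tight and $(2,3)$-limited.

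For the converse, suppose $G$ is $(2,2)$-tight and $(2,3)$-limited. We argue by induction on $|V|$ that the completely regular placements of $G$ are minimally infinitesimally rigid; the base case $K_1$ is trivial. By Lemma \ref{l:construction2}, $G$ may be generated from $K_1$ by $K_2 \sqcup K_2$ substitutions, $0$-extensions, and $1$-extensions avoiding the colour case $r \to \{b,b\}$ and the parallel-edge colour case $b \to \{r,r\}$. It therefore suffices to show that each such move preserves minimal infinitesimal rigidity at the level of completely regular frameworks. The $0$-extension case is elementary, since adding a vertex of degree $2$ with two constraints whose gradients are independent increases the rank by the required amount. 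For the $1$-extension moves of colour case $r\to\{b,r\}$, $b\to\{b,r\}$, $b\to\{b,b\}$ or $r\to\{r,r\}$, rigidity preservation is exactly Lemma \ref{lem:coloured1extension}. The remaining $1$-extension, the colour case $b \to \{r,r\}$ on a blue edge with no parallel red edge, is handled by Lemma \ref{l:coloured1extensionSpecialCase}, whose hypothesis is supplied by the inductive assumption. Finally, the $K_2 \sqcup K_2$ substitution move replaces a degree-$2$-joined pair of bi-coloured edges by a red $K_4$; since a completely regular red $K_4$ realises a minimally rigid block and the substitution adds the correct number of vertices and edges, one checks that the rank increases by the amount dictated by $(2,2)$-tightness, so this move also preserves minimal infinitesimal rigidity.

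Since every move in the construction of Lemma \ref{l:construction2} preserves minimal infinitesimal rigidity of completely regular frameworks, and the class of completely regular placements is dense and open, the induction shows that the completely regular placements of $G$ are minimally infinitesimally rigid, completing the proof.
\end{proof}

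The main obstacle I expect is twofold. First, the $1$-extension colour case $b \to \{r,r\}$ is genuinely delicate: unlike the other cases it cannot be settled by a single limiting special-position flex, and Lemma \ref{l:coloured1extensionSpecialCase} must import the inductive hypothesis to control the auxiliary framework $(G^r,p)$; care is needed that the inductive statement is strong enough (minimal infinitesimal rigidity for \emph{all} graphs with $n$ vertices, not just $G$) to feed this lemma. Second, the $K_2 \sqcup K_2$ substitution is not a standard Henneberg move, so verifying that a completely regular red $K_4$ contributes precisely the right rank, and that no unexpected flex is introduced at the three attaching vertices, requires a short but nontrivial geometric rank computation rather than a purely combinatorial count. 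The necessity of the asymmetric $(2,3)$-limited condition (blue but not red) also deserves a careful geometric justification, since it is the point where the distinction between the Euclidean and non-Euclidean norms enters the combinatorics.
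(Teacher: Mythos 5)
Your proposal is correct and follows essentially the same route as the paper: necessity via Theorem \ref{t:topcount} (plus row-independence of subframeworks for the sparsity conditions), and sufficiency by induction on $|V|$ through the construction of Lemma \ref{l:construction2}, with rigidity preservation supplied by Lemmas \ref{lem:coloured1extension} and \ref{l:coloured1extensionSpecialCase}, and with $0$-extensions and the $K_2\sqcup K_2$ substitution handled as elementary rigid-subgraph arguments. The only difference is that you spell out details the paper leaves implicit -- the Laman-type dependency argument for the $(2,3)$-limited condition and the strong-induction form of the hypothesis needed to invoke Lemma \ref{l:coloured1extensionSpecialCase} -- both of which are accurate fill-ins rather than a different approach.
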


\begin{proof}
It follows from Theorem \ref{t:topcount} that any minimally rigid framework for the context $(\bR^2,(\|\cdot\|_2,\|\cdot\|_q))$ has an underlying graph which is $(2,2)$-tight and $(2,3)$-limited. For the converse we use induction on $|V|$. The single vertex graph $K_1$ is trivially minimally rigid. 
It is straightforward to show that 0-extensions preserve rigidity. By Lemma \ref{l:construction2} it remains to show that the relevant construction moves preserve minimal infinitesimal rigidity. This is elementary in the case of the substitution move for $K_2\sqcup K_2$, since this is a case of rigid subgraph substitution. Also this holds for the required 1-extension moves  by Lemmas \ref{lem:coloured1extension}, \ref{l:coloured1extensionSpecialCase}.
\end{proof}

\section{Non-Euclidean direction-length frameworks}\label{s:directionlength}
 We now consider  double-distance frameworks in $(\bR^2, \|\cdot\|_q)$ where the red edges correspond to distance under the $q$-norm, with $q\in (1,\infty), q \neq 2$, and the blue edges correspond to direction constraints. 
In this setting blue subframeworks admit dilation flexes and it follows that the appropriate sparsity type is that of bi-coloured multigraphs which are $(2,2)$-tight and $(2,3)$-limited. The inductive graph construction of Lemma \ref{l:construction2} is thus available in this setting.
We note that rigidity in the case $q=2$ has been characterised combinatorially by Servatius  and Whiteley \cite{SW}.



The general results in Section \ref{s:doubledistance} apply to the double distance context $(\bR^2, d_b, d_r)$, where
\[
d_b(p_1, p_2) = 
d_a(p_1- p_2),\quad
d_r(p_1, p_2) =  \|p_1- p_2\|_{q}.
\]
Also, the direction-length framework $(G,p)$ is infinitesimally rigid if and only if the rigidity matrix has rank $2|V|-2$. 

It is straightforward to see that a 0-extension move and the $K_2\sqcup K_2$ substitution move preserve direction-length rigidity in $(\bR^2, \|\cdot\|_q)$.
For four of the  $1$-extension colour cases we have the following analogue of Lemma \ref{lem:coloured1extension}.


\begin{lem}\label{lem:coloured1extensionDIRNLEGTH} 
Let $G$ and $G'$ be $(2,2)$-tight  bi-coloured multigraphs and
let $G \to G'$ be a 1-extension move, with colour case
$r\to\{b,r\}, b\to\{b,r\}, b\to\{b,b\}$ or $r\to\{r,r\}$.   
If $(G,p)$ and $(G',p')$ are completely regular direction-length frameworks and $(G,p)$ is minimally rigid then $(G',p')$ is minimally infinitesimally rigid.
\end{lem}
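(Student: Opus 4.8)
The plan is to mirror the structure of the proof of Lemma \ref{lem:coloured1extension}, since the four colour cases are identical and only the geometric meaning of the blue (now direction) constraint has changed. The overall strategy is a limiting/specialisation argument: assuming $(G',p')$ is infinitesimally flexible for a completely regular placement, I would construct a sequence of specialised placements $p_0^k$ of the new vertex $v_0$ converging to one of its neighbours, extract a convergent sequence of unit-norm infinitesimal flexes $u^k$ with $u^k_1 = 0$, and pass to a limit flex $u$ of the degenerate framework. The goal in each case is to show that the limiting velocity $u_2$ lies in the one-dimensional flex space of the deleted bar $p(e_{12})$, so that the restriction of $u$ to the joints of $(G,p)$ is a genuine infinitesimal flex of $(G,p)$, contradicting its minimal rigidity (unless the restriction is trivial, which is excluded by the unit-norm normalisation together with a regular two-bar subframework at $p_0^\infty$).

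The case-by-case geometry is where the direction constraint must be handled carefully. For the colour cases $r\to\{r,r\}$ and $b\to\{b,b\}$ a straightforward linear specialisation suffices, exactly as in parts (iii) and (iv) of Lemma \ref{lem:coloured1extension}; here $b\to\{b,b\}$ now means a direction bar split into two direction bars, and a collinear placement of $p_0$ makes any flex of the two new direction bars automatically a flex of the deleted direction bar. The genuinely new work is in the mixed cases $r\to\{b,r\}$ and $b\to\{b,r\}$. For $r\to\{b,r\}$ I would specialise $p_0^k$ so that the direction (blue) bar $\{p_0^k,p_2;b\}$ forces the velocity $u_2^k$ into the correct direction: a flex of a direction bar constrains the relative velocity to be parallel to the bar (direction constraints preserve the direction, so permit only length-change, i.e.\ translation-along-the-bar components). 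Taking the limit, $u_2$ is forced to be a scalar multiple of the tangent vector $b$ to the $q$-norm sphere at $p_2$, which is precisely the flex direction of the deleted red bar $\{p_1,p_2;r\}$. For $b\to\{b,r\}$ I would instead choose $p_0^k$ so that the limiting constraints pin $u_2$ perpendicular to $p_1-p_2$, the flex direction of the deleted direction bar.

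The main obstacle I anticipate is verifying, in the two mixed cases, that the specialised placements $p_0^k$ can actually be chosen so that the relevant tangent/perpendicularity conditions hold \emph{and} that the limiting two-bar subframework at $p_0^\infty$ (from $p_1$ and $p_3$) remains infinitesimally rigid so that the final nonzero-velocity contradiction goes through. This requires understanding the infinitesimal flex space of a single direction bar in the $q$-norm plane precisely: I would establish that the row of the rigidity matrix for a direction bar $\{p_i,p_j;b\}$ (as given explicitly in Example 2 of Section \ref{ss:furthercontexts}) annihilates exactly the velocity pairs whose difference is parallel to $p_i - p_j$, independently of $q$, since direction is a purely Euclidean-affine notion. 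Once this is pinned down, the limiting argument transfers from Lemma \ref{lem:coloured1extension} with only notational changes, and complete regularity guarantees the needed genericity (non-collinearity of $p_1,p_2,p_3$ and maximal rank of the auxiliary two-bar subframework). I would then note, as in the earlier lemma, that the colour of the connecting edge $v_0v_3$ is irrelevant to the final contradiction, so all four stated colour cases are covered by the same argument.
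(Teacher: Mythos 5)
Your overall strategy is exactly the paper's: a limiting specialisation argument split into the four colour cases, with linear (collinear) specialisation disposing of the monochrome cases $b\to\{b,b\}$ and $r\to\{r,r\}$, and carefully chosen placements $p_0^k\to p_2$ for the two mixed cases. Your key preliminary fact is also correct and is what the paper relies on: the infinitesimal flex space of a direction bar consists of velocity pairs whose difference is \emph{parallel} to the bar, independently of $q$. Your handling of $r\to\{b,r\}$ matches the paper's case (i): place $p_0^k$ on the line through $p_2$ in the tangent direction $b$ of the $q$-sphere centred at $p_1$, so the blue direction bar keeps $u_2^k-u_0^k$ parallel to $b$ while the red bar $\{p_1,p_0^k;r\}$ (with $u_1^k=0$) forces $u_0^k$ to converge to a multiple of $b$; hence $u_2$ is a multiple of $b$ and $(0,u_2)$ flexes the deleted red bar.

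The gap is in the case $b\to\{b,r\}$. You propose to choose $p_0^k$ so that the limiting constraints pin $u_2$ \emph{perpendicular} to $p_1-p_2$, calling that ``the flex direction of the deleted direction bar.'' But the deleted bar here is a direction bar, and by your own earlier (correct) observation its flex direction at $p_2$, given $u_1=0$, is \emph{parallel} to $p_1-p_2$; perpendicularity is the flex direction of a Euclidean \emph{length} bar, which does not exist in this context. If you pin $u_2$ perpendicular to $p_1-p_2$, then $(0,u_2)$ is not an infinitesimal flex of the deleted blue bar, the restriction of the limit flex to the joints of $(G,p)$ is not a flex of $(G,p)$, and no contradiction with the minimal rigidity of $(G,p)$ results --- the argument collapses at its final step. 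The paper's choice is: take $p_0^k$ on the line through $p_2$ for which the flex of the red bar $\{p_2,p_0^k;r\}$ at $p_2$ (with zero velocity at $p_0^k$) points along $p_2\to p_1$; then the blue direction bar $\{p_1,p_0^k;b\}$ with $u_1^k=0$ forces $u_0^k$ parallel to $p_1-p_0^k$, which converges to $p_1-p_2$, and the decomposition $u_2^k=u_0^k+w_2^k$, with $w_2^k$ along $p_2\to p_1$ by the chosen geometry, pins the limit $u_2$ parallel to $p_1-p_2$ as required. With that one correction (parallel, not perpendicular, and the corresponding specialisation) your proof goes through; the remaining steps, including the final nonzero-velocity contradiction via the two bars at $p_0^\infty$, are as in Lemma \ref{lem:coloured1extension}.
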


\begin{proof}
Assume that $p = (p_1,\dots , p_n)$ and $p' = (p_0, p)$. We show that if $(G', p')$ is not infinitesimally rigid then the same is true for $(G, p)$. As before we consider four separate cases according to colour. For the limiting arguments we identify a sequence of diagrams given by a variable joint $p_0^k$ which converges to a position  $p_0^\infty$ coincident with the position of $p_2$. The positioning of
$p_0^k$ is chosen so that (assuming $(G',p')$ is not rigid)
the constructed nontrivial limit flex, of the degenerate limit framework, restricted to the joints of $(G,p)$, yields a nontrivial flex of $(G,p)$.

(i) Consider first the colour case $r \to \{r, b\}$. The diagram sequence is indicated in Figure  \ref{f:nonE_r2rb} where the angle shown is constant and indicates the direction for a flex of the non-Euclidean (red) bar $p_1p_2$ with zero local velocity at $p_1$. We also assume that $p_1p_0^k$ is a non-Euclidean (red) bar and so it follows that 
the local velocity vectors $u_0^k$ at $p_0^k$ have directions converging towards this angle with $p_1p_2$. Since $p_2p_0^k$ is a blue direction-constraint bar, the sequence of local velocities $u_2^k$ at $p_2$ are similarly convergent. However, the limit of this sequence is  $u_2$ and so the pair $u_1=0$ and $u_2$ gives a flex of the bar $\{p_1,p_2;r\}$. Thus the limit flex $u$ of the degenerate framework when restricted to the joints of $(G,p)$ gives a non rigid motion flex of $(G,p)$, as desired.

\begin{center}
\begin{figure}[ht]
\centering
\begin{tikzpicture}[scale=1]

\filldraw (0,0) circle (2pt) node[anchor=west]{$p_0^k$};
\filldraw (-2,2) circle (2pt) node[anchor=west]{$p_1$};
\filldraw (1.5,1) circle (2pt) node[anchor=east]{$p_3$};
\filldraw (-.6,-1.1) circle (2pt) node[anchor=north]{$p_2$};


\draw
(-2,2) -- (0,0) -- (1.5,1);

\draw[dashed]
(0,0) -- (-.6,-1.1);

\draw[dotted]
(-.6,-1.1) -- (-2,2);

\draw plot[smooth, tension=1] coordinates{(-.8,-.7)(-.6,-.6)(-.4,-.7)};

\end{tikzpicture}
\caption{Colour case $r \to \{r,b\}$.}
\label{f:nonE_r2rb}
\end{figure}
\end{center} 

(ii) The colour case $b \to \{b, r\}$. For each $k$ the joint $p_0^k$ is chosen on the line through $p_2$ whose angle with the line from $p_1$ to $p_2$ is such that the velocity direction at $p_2$, for a flex of the (red) length-separation bar $p_2p_0^k$ with zero velocity at $p_0^k$, is in the direction from $p_2$ to $p_1$. See Figure \ref{f:nonE_b2rb}. 

We consider, as before, a sequence $p_0^k$ which  converges to $p_2$ together with a convergent sequence of unit norm flexes $u^k$ with local velocities $u_1^k$ at $p_1$ equal to 0. 
The velocity vector $u_0^k$ is directed towards $p_1$ since the bar between the joints is a blue direction-separation bar. Since $p_2p_0^k$ is a length-separation bar we claim that it follows from the chosen geometry that the limit velocity vector $u$ has local velocity $u_2$ at $p_2$  in the direction of the line from $p_2$ to $p_1$. This completes the proof, as in (i). To see the claim note first that $u_0^k$ converges to a velocity $u_0$ which is in the direction from $p_2$ to $p_1$. Also by the chosen geometry the velocity $u_2^k$ is a sum $u_0^k+ w_2^k$ where $w_2^k$ is in the direction from $p_2$ to $p_1$, and so the claim follows.

%

\begin{center}
\begin{figure}[ht]
\centering
\begin{tikzpicture}[scale=1]

\filldraw (0,0) circle (2pt) node[anchor=west]{$p_0^k$};
\filldraw (-2,2) circle (2pt) node[anchor=west]{$p_1$};
\filldraw (1.5,1) circle (2pt) node[anchor=east]{$p_3$};
\filldraw (-.6,-1.1) circle (2pt) node[anchor=north]{$p_2$};


\draw[dashed]
(-2,2) -- (0,0);

\draw
(1.5,1) -- (0,0) -- (-.6,-1.1);

\draw[dotted]
(-.6,-1.1) -- (-2,2);

\draw plot[smooth, tension=1] coordinates{(-.8,-.7)(-.6,-.6)(-.4,-.7)};

\end{tikzpicture}
\caption{Colour case $b \to \{b,r\}$.}
\label{f:nonE_b2rb}
\end{figure}
\end{center} 

(iii) The colour case $b \to \{b,b\}$, for purely direction  constraints  follows from the standard linear specialisation, irrespective of the colour of $v_0v_3$.

(iv) The purely non-Euclidean bar colour case $r \to \{r,r\}$ follows, as in (iii), from a linear specialisation as in Kitson and Power \cite{kit-pow-1}.

\end{proof}

We have the following analogue of Lemma \ref{l:coloured1extensionSpecialCase} whose proof, which we omit, follows precisely the same logic.

\begin{lem}\label{l:coloured1extensionSpecialCaseDL} 
Let $n\geq 2$ and suppose that all
$(2,2)$-tight, $(2,3)$-limited,  bi-coloured multigraphs with $n$ vertices have completely regular direction-length frameworks which are minimally infinitesimally rigid. Let $G$ be such a graph
and let $G \to G'$ be a 1-extension move on the edge $e=(v_1v_2,b) $ for the colour case $b\to\{r,r\}$, where $G$ has no edge $(v_1v_2, r)$. Then the completely regular direction-length frameworks for $G'$ are minimally infinitesimally rigid.
\end{lem}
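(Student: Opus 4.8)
The plan is to mirror exactly the structure of the proof of Lemma \ref{l:coloured1extensionSpecialCase}, but reinterpreting the two geometric non-degeneracy facts in the direction-length context $(\bR^2, d_a, \|\cdot\|_q)$, where blue edges are direction constraints and red edges are $q$-norm length constraints. The lemma handles the remaining awkward $1$-extension, namely $b\to\{r,r\}$, where a blue (direction) edge $e=(v_1v_2,b)$ is subdivided into two red bars meeting at the new joint $v_0$, with a connecting edge $v_0v_3$ of either colour. As in the source lemma, the key is to build an auxiliary framework in which the direction bar $p_1p_2$ has been removed and replaced by a path of two red bars through an interior joint $p_{n+1}$, and then to show that adding the connecting edge to $p_3$ kills the remaining degree of freedom.

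First I would introduce the companion graph $G^r$ obtained from $G$ by recolouring $(v_1v_2,b)$ as $(v_1v_2,r)$, and note (as in the source) that $G^r$ is again $(2,2)$-tight and $(2,3)$-limited, so by the inductive hypothesis both $(G,p)$ and $(G^r,p)$ have completely regular minimally infinitesimally rigid placements. I would then analyse the framework $(G\setminus e, p)$ obtained by deleting the direction bar: fixing $p_1$, the space of infinitesimal flexes with $u_1=0$ is $1$-dimensional, with $u_2$ a scalar multiple of some unit vector $a$ at $p_2$. The two crucial non-degeneracy facts, inherited from the rigidity of $(G,p)$ and $(G^r,p)$ respectively, are: (1) since $(G,p)$ is infinitesimally rigid as a direction-length framework, the vector $a$ cannot give a flex of the \emph{direction} bar $\{p_1,p_2;b\}$, i.e.\ $a$ is not parallel to $p_1-p_2$; and (2) since $(G^r,p)$ is infinitesimally rigid, $a$ is not parallel to the tangent vector $b$ at $p_2$ of the $q$-norm sphere $\{z: \|z-p_1\|_q = \|p_2-p_1\|_q\}$. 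These two conditions are the direction-length analogues of the ``not orthogonal to $p_1-p_2$'' and ``not parallel to $b$'' conditions in the Euclidean/$q$-norm proof, and they must be re-derived using the specific flex geometry of direction and length bars.

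Next I would place an interior joint $p_{n+1}$ on the segment $[p_1,p_2]$ and add the two red bars $\{p_1,p_{n+1};r\}$ and $\{p_{n+1},p_2;r\}$ to form $((G\setminus e)^+, p^+)$. For a nonzero flex $u$ with $u_1=0$: because $p_{n+1}$ lies on the line $p_1p_2$, the red bar flex condition forces $u_2$ to lie in the tangent direction $b$, while the $(G\setminus e)$ structure forces $u_2$ to lie in direction $a$; by non-degeneracy fact (2), $a$ and $b$ are independent, so $u_2=0$, and then rigidity of $(G^r,p)$ forces $u_3,\dots,u_n=0$, leaving $u_{n+1}\neq 0$ as the sole remaining freedom. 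Finally I would add the connecting bar $\{p_3,p_{n+1};c\}$ to obtain $(G',p')$, using density of completely regular placements to arrange $p_1,p_2,p_3$ non-colinear, so that the one-dimensional freedom at $p_{n+1}$ is obstructed (for $c=r$ directly; for $c=b$ after choosing $p_{n+1}$ so that $p_3-p_{n+1}$ is not tangent to the relevant direction constraint, exactly as in the source). This yields infinitesimal rigidity of $(G',p')$ and hence of all completely regular realisations.

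The main obstacle is verifying non-degeneracy facts (1) and (2) with the \emph{direction} constraint playing the role previously played by the Euclidean length constraint. A direction bar has a qualitatively different infinitesimal flex space than a Euclidean length bar—its flexes are the velocity pairs preserving the \emph{angle} $p_1-p_2$, which permit the dilation motions noted in the surrounding text—so the claim that $a$ is not parallel to $p_1-p_2$ must be argued afresh from the fact that $(G,p)$ (with its direction bar present) is rigid, rather than merely transcribed. Because the paper explicitly states the proof ``follows precisely the same logic,'' the honest task is to confirm that these two transversality conditions genuinely survive the swap of Euclidean length for direction, and that the limiting/special-position mechanics are otherwise unchanged; I expect this to go through, but it is the one place where the direction-length geometry, not generic formalism, does the work.
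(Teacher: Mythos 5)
Your proposal is correct and is precisely the argument the paper intends: the paper omits the proof of Lemma \ref{l:coloured1extensionSpecialCaseDL}, saying only that it ``follows precisely the same logic'' as Lemma \ref{l:coloured1extensionSpecialCase}, and your reconstruction carries out exactly that transcription, with the companion graph $G^r$, the pinned one-dimensional flex space with direction $a$, the collinear joint $p_{n+1}$ with two red $q$-norm bars, and the final connecting bar to $p_3$. You also correctly isolate the only points where the geometry must be re-derived --- the flex condition of a direction bar makes the first transversality fact ``$a$ not \emph{parallel} to $p_1-p_2$'' (rather than not orthogonal), and for a blue connecting bar $\{p_3,p_{n+1};b\}$ the obstruction becomes non-parallelism of $p_3-p_{n+1}$ with the $q$-circle tangent at $p_{n+1}$ --- which is all the adaptation the omitted proof requires.
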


The following theorem can now be deduced from the construction scheme of Lemma \ref{l:construction2} and the discussion above.

\begin{thm}\label{t:direction_noneuclidean}
Let $G$ be a bi-coloured multigraph.
A completely regular non-Euclidean direction-length framework $(G,p)$ in $(\bR^2,\|\cdot\|_q)$, where $1<q<\infty, q\neq 2,$ is minimally rigid if and only if $G$ is $(2,2)$-tight and $(2,3)$-limited.
\end{thm}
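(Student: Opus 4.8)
The plan is to prove Theorem \ref{t:direction_noneuclidean} by exactly mirroring the structure already established for Theorem \ref{t:euclidean_noneuclidean}, since the two contexts share the same mixed sparsity type, namely $(2,2)$-tight and $(2,3)$-limited bi-coloured multigraphs. The necessity direction is immediate: by Theorem \ref{t:topcount} applied to the context $(\bR^2, d_b, d_r)$ with $\dim(X_0)=2$ and a $2$-dimensional rigid motion flex space, any minimally rigid framework must satisfy $|E|=2|V|-2$ together with $(2,2)$-sparsity. The $(2,3)$-limited condition on blue subgraphs then follows from the observation already made in the text that blue (direction) subframeworks admit dilation flexes, so a blue subgraph on $|V'|$ vertices can support at most $2|V'|-3$ independent direction constraints.

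For the sufficiency direction I would argue by induction on $|V|$, invoking the inductive construction of Lemma \ref{l:construction2}: every $(2,2)$-tight, $(2,3)$-limited bi-coloured multigraph is generated from $K_1$ by $K_2\sqcup K_2$ substitutions, $0$-extensions, and $1$-extensions avoiding the colour cases $r\to\{b,b\}$ and the parallel-edge case $b\to\{r,r\}$. The base case $K_1$ is trivially minimally rigid. It then suffices to verify that each construction move preserves minimal infinitesimal rigidity for completely regular direction-length frameworks. As noted in the excerpt, $0$-extensions and the $K_2\sqcup K_2$ substitution (a rigid-subgraph substitution) are elementary. The four $1$-extension colour cases $r\to\{b,r\}$, $b\to\{b,r\}$, $b\to\{b,b\}$, $r\to\{r,r\}$ are handled by Lemma \ref{lem:coloured1extensionDIRNLEGTH}, and the remaining delicate colour case $b\to\{r,r\}$ with no parallel red edge is handled by Lemma \ref{l:coloured1extensionSpecialCaseDL}. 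Assembling these along the recursive construction completes the induction.

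The main obstacle, as in the Euclidean/non-Euclidean setting, is already absorbed into the supporting lemmas rather than the theorem itself, so the proof of the theorem proper is short. Concretely, I would write:

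\begin{proof}
It follows from Theorem \ref{t:topcount} that any minimally rigid non-Euclidean direction-length framework in $(\bR^2,\|\cdot\|_q)$ has an underlying graph which is $(2,2)$-tight; and since blue (direction) subframeworks admit dilation flexes, every blue subgraph is $(2,3)$-sparse, so $G$ is $(2,3)$-limited. For the converse we use induction on $|V|$. The single vertex graph $K_1$ is trivially minimally rigid. By Lemma \ref{l:construction2} it remains to show that the relevant construction moves preserve minimal infinitesimal rigidity for completely regular direction-length frameworks. This is elementary for $0$-extensions and for the $K_2\sqcup K_2$ substitution move, the latter being a case of rigid subgraph substitution. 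The four $1$-extension colour cases $r\to\{b,r\}$, $b\to\{b,r\}$, $b\to\{b,b\}$ and $r\to\{r,r\}$ are covered by Lemma \ref{lem:coloured1extensionDIRNLEGTH}, and the remaining colour case $b\to\{r,r\}$ (with no parallel red edge) is covered by Lemma \ref{l:coloured1extensionSpecialCaseDL}. This completes the induction and hence the proof.
\end{proof}

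The only genuine mathematical content beyond bookkeeping lies in confirming that the dilation-flex phenomenon for direction subframeworks forces the asymmetric $(2,3)$-limited condition (rather than a symmetric one), and in trusting that the special-position limiting arguments of Lemma \ref{lem:coloured1extensionDIRNLEGTH} go through with the direction metric $d_a$ playing the role previously played by the Euclidean norm $\|\cdot\|_2$. Since those arguments turn on the existence of a well-defined flex direction for each bar type and on choosing specialisations $p_0^k\to p_2$ so that the limiting velocity at $p_2$ aligns with the admissible flex direction of the contracted bar, the geometry of direction constraints (velocities constrained to point along the joining line) slots in cleanly, and no new obstruction arises at the level of the theorem.
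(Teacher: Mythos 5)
Your proposal is correct and follows essentially the same route as the paper: the paper deduces the theorem from the construction scheme of Lemma \ref{l:construction2} together with the preceding discussion (0-extensions and the $K_2\sqcup K_2$ substitution being elementary, Lemma \ref{lem:coloured1extensionDIRNLEGTH} for the four mixed colour cases, and Lemma \ref{l:coloured1extensionSpecialCaseDL} for the case $b\to\{r,r\}$), with necessity coming from Theorem \ref{t:topcount} and the dilation-flex observation. You have merely written out explicitly the induction that the paper leaves implicit.
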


\section{Closing remarks}\label{s:closing}

We note some further interesting double-distance contexts.
\medskip

\emph{1. Projection constraints.}  In Nixon, Owen and Power \cite{nix-owe-pow-2} $(2,1)$-tight combinatorial characterisations of  bar-joint frameworks in $\bR^3$ were obtained for the generic rigidity of frameworks whose joints were constrained to a cone or, more generally, to an algebraic surface of revolution which is not a cylinder. One can augment this setting to form the double-distance context of \emph{cone frameworks with projection constraints} as follows.
Let $\C$ be a cone in $\bR^3$ whose axis is orthogonal to the $xy$-plane. Let $d_r(p_1,p_2)$ denote the Euclidean distance between joints located on this cone and let $d_b(p_1, p_2)$ denote the distance between their projections on the $xy$-plane. The context $(\C, d_b, d_r)$ has a 1-dimensional space of rigid infinitesimal motions corresponding to rotation about the axis of the cone. In particular every completely regular framework with a complete (loopless) bi-coloured graph with $|V| \geq 3$ has a  $1$-dimensional space of infinitesimal flexes, and this space is given by infinitesimal rotations. Thus we have the following corollary of Theorem  \ref{t:topcount}.

\begin{cor}\label{c:Prigidityequivalence}
Let $(G,p)$ be a completely regular bar-joint framework for $(\C, d_b, d_r)$ with at least 3 joints. Then the following are equivalent:

(i) $(G,p)$ is continuously rigid;

(ii) $(G,p)$ is infinitesimally rigid;

(iii) $\rank Df_G(p) = 2|V|-1$.
\end{cor}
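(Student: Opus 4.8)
The plan is to establish the three equivalences by a short cycle of implications, leaning heavily on the general theory already in place. The key preparatory observation, which is stated in the paragraph immediately preceding the corollary, is that the complete bi-coloured framework $(K(V),p)$ on the cone $\C$ has an infinitesimal flex space that is exactly $1$-dimensional, consisting of the infinitesimal rotations about the cone's axis, whenever $|V|\geq 3$. I would begin by recording this fact precisely: $\dim(\F((K(V),p)))=1$, and moreover these flexes are restrictions of the genuine continuous rotation motion of $\C$, so they are rigid motion infinitesimal flexes and the complete framework is both infinitesimally and continuously rigid.

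First I would prove (ii)$\Leftrightarrow$(iii). By Lemma \ref{l:Df_lemma} together with Definition \ref{d:infinitesimalflex}, the framework $(G,p)$ is infinitesimally rigid precisely when every infinitesimal flex is a rigid motion infinitesimal flex, i.e.\ when the kernel of $Df_G(p)$ coincides with the flex space of $(K(V),p)$. Since $\dim X_0 = 2$ here (the cone is a $2$-dimensional manifold) and $\dim(\F((K(V),p)))=1$, the rank-nullity theorem gives $\rank Df_G(p) = 2|V| - \dim(\ker Df_G(p))$, and this kernel has dimension $\geq 1$ with equality exactly in the rigid case. Hence infinitesimal rigidity is equivalent to $\rank Df_G(p)=2|V|-1$, which is (iii). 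This is essentially a specialisation of Theorem \ref{t:topcount} to the complete-framework flex dimension computed above.

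Next I would handle (i)$\Leftrightarrow$(ii). The forward direction, that infinitesimal rigidity implies continuous rigidity, is the substance of Theorem \ref{t:rigidityequivalence}: the context $(\C, d_b, d_r)$ is essentially smooth with $(\C,d_r)$ a metric space, $(G,p)$ is completely regular hence regular, and I have just noted that $(K(V),p)$ is continuously rigid, so the hypotheses of that theorem are met and the two notions of rigidity coincide for regular frameworks. The reverse direction, continuous rigidity implies infinitesimal rigidity, is the other half of that same equivalence theorem. Thus (i) and (ii) are identified directly by invoking Theorem \ref{t:rigidityequivalence}.

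The main obstacle, such as it is, lies not in the deductive chain but in justifying cleanly that $(K(V),p)$ is continuously rigid and that its infinitesimal flex space is exactly the $1$-dimensional rotation space for all placements with $|V|\geq 3$ (and in confirming this genuinely fails for $|V|<3$, which is why the hypothesis of at least $3$ joints appears). I would argue this by exhibiting the axial rotation as an explicit isometric motion of $\C$ preserving both $d_r$ and the projected distance $d_b$, giving a lower bound of $1$ on the flex dimension, and then arguing that a completely regular realisation of the complete graph admits no further independent infinitesimal flex because three non-degenerate joints on the cone already pin the configuration up to this rotation. Once this complete-framework computation is secured, the corollary follows immediately by stringing together the equivalences above, and I would close by remarking that it is exactly a corollary of Theorem \ref{t:topcount} as the statement advertises.
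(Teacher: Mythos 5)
Your proposal is correct and follows essentially the same route as the paper, which justifies the corollary by exactly the same two ingredients: the observation (asserted in the paragraph preceding the statement) that completely regular complete frameworks on $(\C,d_b,d_r)$ with $|V|\geq 3$ have a $1$-dimensional flex space given by axial rotations, combined with the general machinery of Lemma \ref{l:Df_lemma} and Theorem \ref{t:rigidityequivalence}. The only difference is that you spell out the rank-nullity step and flag the complete-framework computation as needing an argument, whereas the paper simply asserts it; this is elaboration, not a different approach.
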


The underlying class of structure graphs for minimal infinitesimal rigidity are thus bi-coloured graphs which are $(2,1)$-tight and are $(2,3)$-\emph{limited} in the sense that any subgraph with only blue edges is $(2,3)$-sparse. We conjecture that a completely regular cone-and-projection framework $(G,p)$ is minimally infinitesimally rigid if and only if $G$ is $(2,1)$-tight and $(2,3)$-limited in this sense.

{In analogy with the cone-and-projection context, one may define projection constraints for any chosen surface. For example if one takes the surface to be the cylinder then the appropriate sparsity type is $(2,2)$-tight and $(2,3)$-limited. We gave a recursive construction of such graphs in Section \ref{s:EuclideanAndNon} and this provides  combinatorial methods that can be used to characterise minimal rigidity in the cylinder-and-projection context.  }

\medskip

\emph{2. Reflection distance in a half-plane. }
Let $(Y, d_b, d_r)$ be the double-distance context indicated in Section \ref{ss:furthercontexts} where $Y = [0,\infty)\times \bR$, 
with $d_b$ being Euclidean distance and $d_g$ the reflection distance where 
$d_g(p_1, p_2)$ is the minimum over boundary points $q$ of the sum 
$d_b(p_1, q)+d_b(p_2, q)$.
There is a $1$-dimensional space of rigid motion infinitesimal flexes, given by translational flexes and 
it follows from Theorem  \ref{t:topcount} that a minimally infinitesimally rigid completely regular framework for $(Y, d_b, d_r)$ has bi-coloured graph which is $(2,1)$-tight and $(2,3)$-limited for blue subgraphs, as before.

\medskip

\emph{3. The flat projective plane.}
Let $\P_2$ be the set $B_2/\sim$ where $B_2$ is the closed unit disc in $\bR^2$, centred at the origin, and $\sim$ is the equivalence relation for which two points $x, y$ of the closed disc are related
if and only if they are antipodal points on the boundary, so that $y=-x$. With the natural topology, inherited from the Euclidean topology, $\P_2$ is a compact Hausdorff topological space and is  homeomorphic to the real projective plane. {With this construction we may view the open disc $\bD$ as densely embedded in $\P_2$. Also the modulus $|p|$ of a point $p$ in $\P_2$ is well-defined in terms of this embedding and continuity. Thus a point has modulus $1$ if it corresponds to a boundary point of $\bD$.}

The following separation distances are well-defined on the product  $\bD \times \bD$ in $\P_2 \times \P_2$. \medskip

(i) The \emph{direct (Euclidean) distance}, $d_b(p,q)=\|p-q\|_2$.
\medskip

(ii) The \emph{re-entrant  distance}, $d_r(p,q)=\inf_{|x|=1}\{\|p-x\|_2+\|q+x\|_2\}$.
\medskip

(iii) The \emph{geodesic distance}, $d_g(p,q)=\mbox{ min} \{d_b(p,q), d_r(p,q)\}$.
\medskip

The re-entrant distance occurs as the sum of the lengths of two line segments to antipodal points on the boundary for one of the positions where the angles subtended by the corresponding diameter are equal.
While the direct and re-entrant distances do not have continuous extensions to
$\P_2 \times \P_2$ the geodesic distance does and $(\P_2, d_g)$ is a complete metric space. 

For the double-distance context  $(\P_2, d_b, d_r)$ there is  a 1-dimensional space of rigid infinitesimal motions, corresponding to rotation about the origin. Also it follows 
from Theorem  \ref{t:topcount}, once again, that a minimally infinitesimally rigid completely regular framework  has bi-coloured graph which is $(2,1)$-tight and $(2,3)$-limited and we conjecture that this graph condition is also sufficient.
\medskip

{ We remark that George and Ahmed \cite{Geo-Ahm} consider the rigidity of a different class of frameworks in a $1$-dimensional projective space. For their context the structure graphs are hypergraphs and the rigidity matrix is given in terms of cross ratios.
The conjecture for the combinatorial characterisation of the hypergraphs for minimal infinitesimal rigidity in this setting was proved by Jord\'{a}n and Kazsanitzky \cite{Jor-Kaz}.
}
\medskip

\emph{4. Elevation constraints.} 
Consider $\bR^d$ to be oriented with respect to a coordinate hyperplane
and normal direction. Then we may consider an associated double-distance context where $d_b$ is Euclidean distance and $d_r$ is an \emph{elevation separation} relative to the hyperplane. Thus for $\bR^2$ and the hyperplane $x=0$ we may define
\[
d_r(p_1,p_2) = (y_1-y_2)^2.
\]
We conjecture that the sparsity type characterising rigidity is that the bi-coloured structure graph should be $(2,2)$-tight containing a blue spanning tree with any blue subgraph being $(2,3)$-sparse and any red subgraph being $(1,1)$-sparse. \\

\emph{5. Global rigidity.} It would be interesting to consider global rigidity in double-distance contexts. On the cylinder, with Euclidean distances, global rigidity was characterised in \cite{JNglobal}. A key step was to understand `vertically restricted' frameworks on the cylinder. These are double-distance frameworks on the cylinder where the two types of distance constraints are Euclidean (blue) and `dilation' (red). More formally if $(G,p)$ and $(G,\hat p)$ are frameworks on the cylinder with $p(v_i)=(x_i,y_i,z_i)$ and $\hat p(v_i)=(\hat x_i,\hat y_i, \hat z_i)$, then the dilation constraint for an edge $v_iv_j$ says that for $(G,p)$ and $(G,\hat p)$ to be equivalent, we must have $z_i/z_j = \hat z_i / \hat z_j$. In particular \cite{JNglobal} characterised the bi-coloured graphs which are `vertically restricted' rigid on the cylinder when all possible vertical constraints are present (equivalently when the red subgraph is a spanning connected graph). In the general double-distance context one may conjecture that the sparsity type for minimal rigidity is that the bi-coloured structure graph should be $(2,1)$-tight with any blue subgraph being $(2,2)$-sparse and any red subgraph being $(1,1)$-sparse.\\

\emph{6. Coning.} Frameworks on the sphere are often understood by way of coning \cite{Whicones}. One can check that the following association provides double-distance cone bar-joint frameworks $(\tilde{G}, \tilde{p})$ which are equivalent to bar-joint frameworks in $\bR^3$ constrained to a cylinder.

Let $X = \bR^3$, let $d_1$ be the usual Euclidean distance
and let $d_2$ be the separation distance defined by 
\[
d_2((x_1, y_1, z_1),(x_2, y_2, z_2)) = ((x_1-x_2)^2
+(y_1-y_2)^2)^{\frac{1}{2}}.
\]
Let $G=(V,E)$ be a simple graph and define $\tilde{G} = (\tilde{V}, \tilde{E})$ where 
\[
\tilde{V} = V\cup\{v_0\}, \tilde{E} = E_1\cup E_2, E_1=E, 
E_2 = \{v_0\}\times V.  
\]
Let $(G,p)$ be a bar-joint framework constrained to the unit radius cylinder $\Y$ about the $z$-axis, and let $\tilde{p} = (p_0,p)$ with $p_0=(0,0,0)$. Then the double-distance bar-joint framework $(\tilde{G},\tilde{p})$ is naturally equivalent to $(G,p)$ on $\Y$. \\

\bibliographystyle{abbrv}
\def\lfhook#1{\setbox0=\hbox{#1}{\ooalign{\hidewidth
  \lower1.5ex\hbox{'}\hidewidth\crcr\unhbox0}}}

\end{document}